\numberwithin{equation}{section}
\newtheorem{theorem}{Theorem}[section]
\newtheorem{lemma}[theorem]{Lemma}
\newtheorem{proposition}[theorem]{Proposition}
\newtheorem{corollary}[theorem]{Corollary}
\theoremstyle{definition}
\newtheorem{definition}[theorem]{Definition} 
\newtheorem{remark}[theorem]{Remark}
\newtheorem{example}[theorem]{Example}
\begin{document}


\title[]{Affine cartesian codes} 

\thanks{The second author was supported by 
COFAA-IPN and SNI. The third author was supported by
SNI}

\author{Hiram H. L\'opez}
\address{
Departamento de
Matem\'aticas\\
Centro de Investigaci\'on y de Estudios
Avanzados del
IPN\\
Apartado Postal
14--740 \\
07000 Mexico City, D.F.
}
\email{hlopez@math.cinvestav.mx}

\author{Carlos Renter\'\i a-M\'arquez}
\address{
Departamento de Matem\'aticas\\
Escuela Superior de F\'\i sica y
Matem\'aticas\\
Instituto Polit\'ecnico Nacional\\
07300 Mexico City, D.F.}
\email{renteri@esfm.ipn.mx}

\author{Rafael H. Villarreal}
\address{
Departamento de
Matem\'aticas\\
Centro de Investigaci\'on y de Estudios
Avanzados del
IPN\\
Apartado Postal
14--740 \\
07000 Mexico City, D.F.
}
\email{vila@math.cinvestav.mx}

\keywords{Evaluation codes, 
minimum distance, complete intersections, vanishing
ideals, degree, regularity, Hilbert
function, algebraic invariants.}
\subjclass[2010]{Primary 13P25; Secondary 14G50, 94B27, 11T71.} 
\begin{abstract} 
We compute the basic parameters (dimension, length,
minimum distance) of affine evaluation codes defined on a cartesian
product of finite sets. Given a sequence of positive
integers, we construct an evaluation code, over a degenerate
torus, with prescribed parameters of a certain type. 
As an application of our results, we recover the 
formulas for the minimum distance of various families of 
evaluation codes.
\end{abstract}

\maketitle 

\section{Introduction}\label{intro-cartesian-codes}

Let $K$ be an arbitrary field and let $A_1,\ldots,A_n$ be 
a collection of non-empty subsets of $K$ with a finite 
number of elements. Consider the following finite sets: (a) the  {\it
cartesian product\/} 
$$
X^*:=A_1\times\cdots\times A_n\subset\mathbb{A}^{n}, 
$$
where $\mathbb{A}^{n}=K^n$ is an affine
space over the field $K$, and (b) 
the {\it projective closure\/} of $X^*$
$$
Y:=\{[(\gamma_1,\ldots,\gamma_n,1)]\, \vert\, \gamma_i\in A_i
\mbox{ for all }i\}\subset\mathbb{P}^{n},
$$
where $\mathbb{P}^{n}$ is a projective space over the field $K$. We 
also consider $X$, the image of $X^*\setminus\{0\}$ under
the map $\mathbb{A}^n\setminus\{0\}\mapsto \mathbb{P}^{n-1}$, $\gamma\mapsto
[\gamma]$. In what follows $d_i$ denotes $|A_i|$, the
cardinality of $A_i$ for $i=1,\ldots,n$. We may always 
assume that $2\leq d_i\leq d_{i+1}$ for all $i$ (see
Proposition~\ref{hiram}).  
As usual, we denote the finite field with $q$ elements by
$\mathbb{F}_q$. The multiplicative group of the field $K$ will be denoted by $K^*$. 

Let $S=K[t_1,\ldots,t_n]$ 
be a polynomial ring, let $P_1,\ldots,P_m$ be the points of $X^*$,
and let $S_{\leq d}$ be 
the $K$-vector space of all polynomials of $S$ of
degree at most $d$. The {\it evaluation map\/} 
\begin{equation*}
{\rm ev}_d\colon S_{\leq d}\longrightarrow K^{|X^*|},\ \ \ \ \ 
f\mapsto \left(f(P_1),\ldots,f(P_m)\right),
\end{equation*}
defines a linear map of
$K$-vector spaces. The image of ${\rm ev}_d$, denoted by $C_{X^*}(d)$,
defines a {\it linear code\/}. Permitting an abuse of language, we are
referring to $C_{X^*}(d)$ as a {\it
linear code\/}, even though the field $K$ might not be finite. We call
$C_{X^*}(d)$ the {\it affine cartesian evaluation code\/} ({\it
cartesian code} for short) of
degree $d$ on the set $X^*$. 
If $K$ is finite, cartesian 
codes are special types of affine Reed-Muller codes in the sense of 
\cite[p.~37]{tsfasman}. 

The {\it dimension\/} and the {\it length\/} 
are two of the basic parameters of $C_{X^*}(d)$, 
they are defined as $\dim_K C_{X^*}(d)$ and $|{X^*}|$, respectively. 
A third
basic parameter of $C_{X^*}(d)$ is the {\it minimum
distance}, which is given by 
$$
\delta_{X^*}(d)=\min\{\|{\rm ev}_d(f)\|
\colon {\rm ev}_d(f)\neq 0; f\in S_{\leq d}\},
$$
where $\|{\rm ev}_d(f)\|$ is the number of non-zero
entries of ${\rm ev}_d(f)$. It is well known that the code $C_{X^*}(d)$ has
the same parameters that $C_Y(d)$, the
projective evaluation code of degree $d$ on $Y$. We give a short proof
of this fact by showing that these codes are equal
(Proposition~\ref{bridge-affine-projective}).

The main results of this paper describe the basic parameters of
cartesian evaluation codes and show the existence of cartesian
codes---over degenerate tori---with prescribed parameters of a certain
type.

Some families of evaluation codes---including several 
variations of Reed-Muller codes---have been studied extensively using
commutative algebra methods (e.g., Hilbert functions, resolutions, Gr\"obner
bases), see \cite{delsarte-goethals-macwilliams,
duursma-renteria-tapia,gold-little-schenck,GRT,affine-codes,
algcodes,renteria-tapia-ca,renteria-tapia-ca2,sorensen,tohaneanu}. In this
paper we use these methods to study the family of cartesian codes. 

A key observation that allows us to use commutative algebra methods to
study evaluation codes is that the kernel of the evaluation map ${\rm
ev}_d$ is precisely $S_{\leq d}\cap I(X^*)$, where $I(X^*)$
is the {\it vanishing ideal\/} of $X^*$ consisting of 
all polynomials of $S$ that vanish on ${X^*}$. Thus, as is seen in
the references given above, the algebra of $S/I(X^*)$ is related to
the basic parameters of $C_{X^*}(d)$. Below we will clarify some more
the role of commutative algebra in coding theory.

Let $S[u]=\oplus_{d=0}^\infty
S[u]_d$ be a polynomial ring with the standard grading, where
$u=t_{n+1}$ is a new variable. Recall that the {\it vanishing ideal\/} of
$Y$, denoted by $I(Y)$, is the ideal of $S[u]$ generated by the
homogeneous polynomials that vanish on $Y$.  We use the algebraic
invariants (regularity, 
degree, Hilbert function) of the graded ring $S[u]/I(Y)$ as a tool 
to study the described codes.  It is a 
fact that this graded ring has the same invariants that the 
affine ring $S/I(X^*)$ \cite[Remark~5.3.16]{singular}. The {\it Hilbert
function\/} of $S[u]/I(Y)$ is given by 

$
\ \ \ \ \ \ \ \ \ \ \ \ \ \ \ \ \ \ \ \ \ \ 
\ \ \ \ \ \ \ H_Y(d):=
\dim_K(S[u]_d/I({Y})\cap S[u]_d).
$

According to \cite[Lecture 13]{harris}, 
we have that $H_Y(d)=|Y|$ for
$d\geq |Y|-1$. This means that $|Y|$ is the {\it degree} of $S[u]/I(Y)$ in
the sense of algebraic geometry \cite[p.~166]{harris}. The
\emph{regularity\/} of $S[u]/I(Y)$,  
denoted by ${\rm reg}\, S[u]/I(Y)$, is the least integer $\ell\geq 0$ such that
$H_Y(d)=|Y|$ for $d\geq \ell$. 

The algebraic invariants 
of $S[u]/I(Y)$ occur in algebraic coding 
theory, as we now briefly explain. The code $C_{X^*}(d)$, has
{\it length\/} $|Y|$ and {\it dimension} 
$H_Y(d)$. The knowledge of the 
regularity of $S[u]/I(Y)$ is important for applications to 
coding theory: for $d\geq {\rm reg}\, S[u]/I(Y)$ the code $C_{X^*}(d)$ coincides
with the underlying vector space $K^{|X^*|}$ and has, 
accordingly, minimum distance equal to $1$. Thus, potentially 
good codes $C_{X^*}(d)$ can occur only if $1\leq d < {\rm
reg}(S[u]/I(Y))$.

The contents of this paper are as follows. We show that the vanishing
ideal $I(Y)$ is a complete intersection (Proposition~\ref{ci-summary-i(y)}). Then, 
one can use \cite[Corollary~2.6]{duursma-renteria-tapia} to compute the 
algebraic invariants of $I(Y)$ in terms of the sequence $d_1,\ldots,d_n$. As a 
consequence, we compute the dimension of $C_{X^*}(d)$ and show that
$\delta_{X^*}(d)=1$ for $d\geq \sum_{i=1}^n(d_i-1)$ (Theorem~\ref{dim-length}). 

In Section~\ref{cartesian-section}, we show upper bounds in terms of
$d_1,\ldots,d_n$ on the  
number of roots, over $X^*$, of polynomials in $S$ which do not 
vanish at all points of $X^*$ (Proposition~\ref{maria-vila-bound},
Corollary~\ref{maria-vila-bound-affine}). The 
main theorem of 
Section~\ref{cartesian-section} is a formula for
the minimum distance of $C_{X^*}(d)$
(Theorem~\ref{lopez-renteria-vila}). 
In general, the problem of computing the minimum
distance of a linear code is difficult because it is NP-hard
\cite{vardy-complexity}.  The
basic parameters 
of evaluation codes over finite 
fields have been computed in a number of cases. Our main results 
provide unifying tools to treat some of these cases. As an application, if
$Y$ is a projective
torus in $\mathbb{P}^n$ over a finite field $K$, we recover a formula
of \cite{ci-codes} for the minimum distance of $C_Y(d)$ 
(Corollary~\ref{maria-vila-hiram-eliseo}). If $Y$ is the image of
$\mathbb{A}^n$ under the map $\mathbb{A}^{n}\rightarrow
\mathbb{P}^{n} $, $x\mapsto [(x,1)]$, we also recover a formula of
\cite{delsarte-goethals-macwilliams} for the minimum distance of
$C_Y(d)$ (Corollary~\ref{delsarte-etal}). If $Y=\mathbb{P}^{n}$,  
the parameters of $C_Y(d)$ are described in
\cite[Theorem~1]{sorensen} (see also \cite{lachaud}), 
notice that in this case $Y$ does not
arises as the projective closure of some cartesian product $X^*$. 

Finally, in Section~\ref{degenerate-tori-section}, we consider 
cartesian codes over degenerate tori. Given a sequence
$d_1,\ldots,d_n$ of positive integers, there exists a finite field
$\mathbb{F}_q$ such that $d_i$ divides $q-1$ for all $i$. We use this
field to construct a cartesian code---over a degenerate torus---with
previously fixed parameters, expressed in terms of $d_1,\ldots,d_n$ 
(Theorem~\ref{construction-of-cartesian-codes}). As a byproduct, 
we obtain formulae for the basic parameters of any affine 
evaluation code over a degenerate torus (see
Definition~\ref{degenerate-torus-def}). Thus, we are also 
recovering the main results of \cite{evaluation11,evaluation}
(Remark~\ref{recovering-evaluation}).

It should be mentioned that we do not know of any efficient decoding
algorithm for the family of 
cartesian codes.  The reader is referred to \cite[Chapter~9]{CLO1},
\cite{joyner-decoding,van-lint} and the references there for some
available decoding algorithms for some families of linear codes. 

For all unexplained
terminology and additional information,  we refer to 
\cite{Eisen,harris,Sta1} (for commutative algebra and the
theory of Hilbert functions), and 
\cite{MacWilliams-Sloane,stichtenoth,tsfasman} (for the theory of linear codes).

\section{Complete intersections and algebraic invariants}\label{ci-section}

We keep the same notations and definitions used in
Section~\ref{intro-cartesian-codes}. In what follows $d_i$ denotes $|A_i|$, the
cardinality of $A_i$ for $i=1,\ldots,n$. 
In this section we show that
$I(Y)$ is a complete intersection and compute the algebraic
invariants of $I(Y)$ in terms of $d_1, \ldots,d_n$.

\begin{theorem}{\rm(Combinatorial Nullstellensatz
\cite[Theorem~1.2]{alon-cn})}
\label{comb-null} Let $S=K[t_1,\ldots,t_n]$ be a
polynomial ring over a field $K$, let $f\in S$, and let
$a=(a_i)\in\mathbb{N}^n$. Suppose that the coefficient of
$t^a$ in $f$ is non-zero and $\deg\left(f\right)=a_1+\cdots+a_n$. If
$A_{1},\ldots ,A_{n}$ are subsets of $K$, with $\left|A_{i}\right| > a_i$ for
all $i$, then there are $x_{1}\in A_{1},\ldots,x_n\in A_n$ such that
$f\left(x_{1},\ldots ,x_{n}\right) \neq 0$.  
\end{theorem}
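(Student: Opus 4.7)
The plan is to follow Alon's original argument, which rests on two ingredients: a polynomial division step and a grid-vanishing lemma. First I would reduce to the case where each $A_i$ has exactly $a_i+1$ elements; shrinking the sets only strengthens the conclusion, so replace each $A_i$ by an arbitrary subset $A_i'\subseteq A_i$ with $|A_i'|=a_i+1$ and work inside $A_1'\times\cdots\times A_n'$.

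Next, set $g_i(t_i):=\prod_{\alpha\in A_i'}(t_i-\alpha)$, a monic polynomial in $t_i$ of degree $a_i+1$. Applying the division algorithm one variable at a time, I would write $f=\sum_{i=1}^{n} h_i\,g_i+\bar{f}$ with $\deg_{t_i}\bar{f}\leq a_i$ for every $i$. The heart of the proof is the claim that the coefficient of $t^a=t_1^{a_1}\cdots t_n^{a_n}$ in $\bar{f}$ equals the coefficient of $t^a$ in $f$, and is therefore nonzero. To see this, observe that each reduction step substitutes $t_i^{a_i+1}$ by the degree-$a_i$ polynomial $t_i^{a_i+1}-g_i(t_i)$, which turns a monomial $t^b$ with $b_i\geq a_i+1$ into a sum of monomials of strictly smaller total degree; because $\deg f=a_1+\cdots+a_n$, none of these lower-degree monomials can equal $t^a$. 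This is exactly where the tight degree hypothesis enters.

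Finally, I would conclude via the grid-vanishing lemma: if $p\in K[t_1,\ldots,t_n]$ satisfies $\deg_{t_i}p<|A_i'|$ for each $i$ and vanishes on $A_1'\times\cdots\times A_n'$, then $p\equiv 0$. This lemma is proved by a short induction on $n$, the base case being the familiar fact that a univariate polynomial of degree less than $|A_1'|$ cannot have $|A_1'|$ roots, and the inductive step fixing the first $n-1$ coordinates and applying the base case in the last variable. Since $\bar{f}$ is nonzero (its $t^a$-coefficient is nonzero) and has $\deg_{t_i}\bar{f}\leq a_i<|A_i'|$, the lemma produces a point $(x_1,\ldots,x_n)\in A_1'\times\cdots\times A_n'$ where $\bar{f}$ does not vanish. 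But $f$ and $\bar{f}$ agree on $A_1'\times\cdots\times A_n'$, since each $g_i$ vanishes identically on $A_i'$ in the $i$-th coordinate, so $f(x_1,\ldots,x_n)\neq 0$, as required.

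The main obstacle is the coefficient-tracking in the second step: one must carefully verify that no reduction produces a new contribution to the coefficient of $t^a$, and this is precisely where the exact degree hypothesis $\deg f=a_1+\cdots+a_n$ (together with the nonvanishing of the $t^a$-coefficient) does its essential work.
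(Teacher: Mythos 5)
Your proof is correct, but note that the paper itself offers no proof of this statement: it is quoted verbatim from Alon's paper (Theorem~1.2 of the cited reference) and used as a black box, so there is no internal argument to compare against. What you have written is essentially Alon's own proof --- reduction to $|A_i|=a_i+1$, division by the $g_i$ to obtain a reduced polynomial $\bar f$ with $\deg_{t_i}\bar f\le a_i$, the observation that the tight degree hypothesis $\deg f=a_1+\cdots+a_n$ forces the $t^a$-coefficient to survive the reduction, and the grid-vanishing lemma (which is the same statement the paper proves separately as Lemma~\ref{dec12-11}(c), there deduced \emph{from} the Combinatorial Nullstellensatz rather than used to prove it). The one step worth spelling out in a final write-up is the coefficient-tracking claim, and your justification of it is sound: every monomial that undergoes a reduction drops strictly in total degree, hence below $\deg t^a$, while $t^a$ itself is never reducible.
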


\begin{lemma}\label{dec12-11} $(\mathrm{a})$ $|Y|=|X^*|=d_1\cdots d_n$. 

$(\mathrm{b})$ If $A_i$ is a subgroup of $(K^*,\cdot\,)$ for all $i$, then
$|X^*|/|A_1\cap\cdots\cap A_n|=|X|$.

$(\mathrm{c})$ If $G\in I(X^*)$ and $\deg_{t_{i}}\left(G\right)
< d_i$ for $i=1,\ldots ,n$, then $G=0$.
\end{lemma}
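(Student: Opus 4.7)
\smallskip

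\noindent\textbf{Plan for the proof.}

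For part (a) I would just count. Since $X^*=A_1\times\cdots\times A_n$ is a set-theoretic product, $|X^*|=d_1\cdots d_n$ is immediate. For $|Y|=|X^*|$, the map $X^*\to Y$, $(\gamma_1,\ldots,\gamma_n)\mapsto[(\gamma_1,\ldots,\gamma_n,1)]$, is a bijection: its surjectivity is the definition of $Y$, and distinct tuples give distinct projective points because the $(n{+}1)$-st coordinate is fixed at $1$, so any projective equivalence $\lambda\cdot(\gamma,1)=(\gamma',1)$ forces $\lambda=1$ and hence $\gamma=\gamma'$.

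For part (b), under the assumption that each $A_i$ is a subgroup of $(K^*,\cdot)$, we have $0\notin A_i$, so $X^*\setminus\{0\}=X^*$ and the map $\pi\colon X^*\to X$, $\gamma\mapsto[\gamma]$, is surjective. I would compute the fiber $\pi^{-1}([\gamma])$ for $\gamma=(\gamma_1,\ldots,\gamma_n)\in X^*$. A point $\gamma'=(\gamma_1',\ldots,\gamma_n')\in X^*$ lies in the same fiber iff $\gamma'=\lambda\gamma$ for some $\lambda\in K^*$; equivalently $\lambda\gamma_i\in A_i$ for all $i$. Because $\gamma_i\in A_i$ and $A_i$ is a subgroup, $\lambda\gamma_i\in A_i$ is equivalent to $\lambda\in A_i$. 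Hence $\pi^{-1}([\gamma])=\{\lambda\gamma:\lambda\in A_1\cap\cdots\cap A_n\}$, which has cardinality $|A_1\cap\cdots\cap A_n|$ (the map $\lambda\mapsto\lambda\gamma$ is injective since each $\gamma_i\ne 0$). All fibers therefore have the same size, and $|X^*|/|A_1\cap\cdots\cap A_n|=|X|$ follows.

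Part (c) is where the work is, and my approach would be a direct application of the Combinatorial Nullstellensatz (Theorem~\ref{comb-null}). Suppose for contradiction that $G\neq 0$. I would choose $a=(a_1,\ldots,a_n)\in\mathbb{N}^n$ to be the exponent vector of a monomial $t^a=t_1^{a_1}\cdots t_n^{a_n}$ appearing in $G$ with non-zero coefficient and of \emph{maximal total degree}, so that $\deg(G)=a_1+\cdots+a_n$. By the hypothesis $\deg_{t_i}(G)<d_i$ we have $a_i\leq\deg_{t_i}(G)<d_i=|A_i|$ for every $i$. Theorem~\ref{comb-null} then yields $x_i\in A_i$ with $G(x_1,\ldots,x_n)\neq 0$, contradicting $G\in I(X^*)$. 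Hence $G=0$.

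The main (very small) obstacle is the choice in (c): one must pick a monomial of $G$ that simultaneously has non-zero coefficient, satisfies the degree-sum equality required by Theorem~\ref{comb-null}, and has each partial exponent strictly less than the corresponding $d_i$. Picking a top-total-degree monomial handles the first two requirements, and the partial-degree hypothesis on $G$ handles the third automatically.
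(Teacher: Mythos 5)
Your proposal is correct and follows essentially the same route as the paper: (a) is the same bijection $x\mapsto[(x,1)]$, (b) is the same orbit count (you phrase it via fibers of $X^*\to X$ where the paper uses the kernel of the group epimorphism, but it is the identical computation), and (c) is the same application of the Combinatorial Nullstellensatz to a top-total-degree monomial of $G$.
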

\begin{proof} (a) The map $X^*\mapsto Y$, $x\mapsto [(x,1)]$, is
bijective. Thus, $|Y|=|X^*|$. (b) Since $A_i$ is a group for all $i$,
the sets $X^*$ and $X$ are also groups under componentwise
multiplication. Thus, there is an epimorphism of 
groups $X^*\mapsto X$,
$x\mapsto [x]$, 
whose kernel is equal to 
$$
\{(\gamma,\ldots,\gamma)\in X^*\colon \gamma\in 
A_1\cap\cdots\cap A_n\}.
$$
Thus, $|X^*|/|A_1\cap\cdots\cap A_n|=|X|$. To show (c) we 
proceed by contradiction. Assume that $G$ is
non-zero. Then, there is a monomial $t^a=t_1^{a_1}\cdots t_n^{a_n}$ of $G$ with 
$\deg(G)=a_1+\cdots+a_n$, where $a=(a_1,\ldots,a_n)$ and $a_i>0$ for
some $i$. As $\deg_{t_{i}}(G) <
d_i$ for all $i$, then $a_i<\left|A_{i}\right| =d_i$ for all $i$. Thus, by 
Theorem~\ref{comb-null}, there are $x_{1},\ldots,x_{n}$ with $x_i\in
A_i$ for all $i$ such that 
$G\left( x_{1},\ldots ,x_{n}\right) \neq 0$, a contradiction to the
assumption that $G$ vanishes on $X^*$. 
\end{proof}

\begin{lemma}\label{epifania-dec29-11} Let $f_i$ be the 
polynomial $\prod_{\gamma\in A_i}(t_i-\gamma)$ for $1\leq i\leq n$. Then
$$
I(X^*)=(f_1,\ldots,f_n).
$$
\end{lemma}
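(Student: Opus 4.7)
The plan is to prove the two inclusions separately. The inclusion $(f_1,\ldots,f_n)\subseteq I(X^*)$ is immediate: for any point $P=(\gamma_1,\ldots,\gamma_n)\in X^*$ with $\gamma_i\in A_i$, the factor $(t_i-\gamma_i)$ appears in $f_i$, so $f_i(P)=0$ for every $i$. Hence every element of the ideal generated by the $f_i$ vanishes on $X^*$.

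For the reverse inclusion, I would take an arbitrary $F\in I(X^*)$ and reduce it modulo $f_1,\ldots,f_n$. The key point is that each $f_i$ is \emph{monic} in the variable $t_i$ of degree $d_i=|A_i|$, and the leading term $t_i^{d_i}$ involves only $t_i$. This allows us to perform successive polynomial division: divide $F$ by $f_1$ viewed as a polynomial in $t_1$ (with coefficients in $K[t_2,\ldots,t_n]$) to get $F=q_1 f_1 + R_1$ with $\deg_{t_1}(R_1)<d_1$; then divide $R_1$ by $f_2$ in the variable $t_2$, noting that this division does not increase the $t_1$-degree because the quotient and remainder lie in $K[t_1,\ldots,t_n]$ and only monomials in $t_2$ are reduced. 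Iterating, we obtain
\[
F \;=\; g_1 f_1+\cdots+g_n f_n + G,
\]
where $G\in S$ satisfies $\deg_{t_i}(G)<d_i$ for every $i=1,\ldots,n$.

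Since each $f_i$ vanishes on $X^*$, the expression above forces $G=F-\sum g_i f_i$ to vanish on $X^*$ as well, i.e.\ $G\in I(X^*)$. Now we are exactly in the situation of Lemma~\ref{dec12-11}(c), which is the Combinatorial Nullstellensatz packaged for this setting: a polynomial in $I(X^*)$ with $\deg_{t_i}<d_i$ for all $i$ must be zero. Therefore $G=0$, and $F=g_1 f_1+\cdots+g_n f_n\in(f_1,\ldots,f_n)$, finishing the proof.

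The only real obstacle is making the multivariate division argument precise---specifically, checking that reducing modulo $f_i$ in the variable $t_i$ does not destroy the bounds $\deg_{t_j}<d_j$ already achieved for $j<i$. This is clean because the leading monomial of $f_i$ is $t_i^{d_i}$, involving no other variable, so the division algorithm in $K[t_2,\ldots,t_n][t_1]$ (and its analogues) keeps the other degrees under control. Alternatively, one can bypass this bookkeeping by invoking Gr\"obner basis theory: under the lexicographic order, $\{f_1,\ldots,f_n\}$ has pairwise coprime leading terms $t_1^{d_1},\ldots,t_n^{d_n}$, so it forms a Gr\"obner basis, and the normal form of any $F\in I(X^*)$ has all $t_i$-degrees strictly less than $d_i$; Lemma~\ref{dec12-11}(c) then finishes the argument identically.
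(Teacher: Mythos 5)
Your proposal is correct and follows essentially the same route as the paper: the easy inclusion by direct evaluation, and the reverse inclusion by dividing $F$ by $f_1,\ldots,f_n$ (the paper invokes the multivariate division algorithm with respect to reverse lex, which works precisely because the leading terms $t_i^{d_i}$ are pairwise coprime, as you note) to get a remainder $G\in I(X^*)$ with $\deg_{t_i}(G)<d_i$, killed by Lemma~\ref{dec12-11}(c). Your iterated-univariate-division bookkeeping is just an unpacking of that same step.
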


\begin{proof}
``$\supset$'' This inclusion is clear because $f_i$ vanishes on $X^*$
by construction. ``$\subset$'' Take $f$ in $I(X^*)$. Let $\succ$ be the reverse
lexicographical order on the monomials of $S$. By the division 
algorithm  \cite[Theorem~1.5.9, p.~30]{AL}, we can write 
$$
f=g_1f_1+\cdots+g_nf_n+G,
$$
where each of the terms of $G$ is not divisible by any of the
leading monomials $t_1^{d_1},\ldots,t_n^{d_n}$, i.e., $\deg_{t_i}(G)<d_i$ 
for all $i$. As $G$ belongs to $I(X^*)$, by Lemma~\ref{dec12-11}, we
get that $G=0$. Thus, $f\in (f_1,\ldots,f_n)$.
\end{proof}

The degree and the regularity of $S[u]/I(Y)$ can be computed from its
Hilbert series. Indeed, the Hilbert series can be written as
$$
F_Y(t):=\sum_{i=0}^{\infty}H_Y(i)t^i=\sum_{i=0}^{\infty}\dim_K(S[u]/I(Y))_it^i=
\frac{h_0+h_1t+\cdots+h_rt^r}{1-t},
$$
where $h_0,\ldots,h_r$ are positive integers. This follows from the
fact that $I(Y)$ is a 
Cohen-Macaulay ideal of height $n$ \cite{geramita-cayley-bacharach}. 
The number $r$ is the
regularity of $S[u]/I(Y)$ and $h_0+\cdots+h_r$ is the degree of
$S[u]/I(Y)$ (see \cite[Corollary~4.1.12]{monalg}). 

\begin{definition}
A homogeneous ideal $I\subset S$ is called a {\it complete intersection\/} if 
there exists homogeneous polynomials $g_1,\ldots,g_{r}$ such that
$I=(g_1,\ldots,g_{r})$,  
where $r$ is the height of $I$. 
\end{definition}

\begin{proposition}\label{ci-summary-i(y)} {\rm (a)}
$I(Y)=(\,\prod_{\gamma\in A_1}(t_1-u\gamma),\ldots,\prod_{\gamma\in
A_n}(t_n-u\gamma))$.

{\rm (b)} $I(Y)$ is a complete intersection. 

{\rm (c)} $F_Y(t)=\prod_{i=1}^n(1+t+\cdots+t^{d_i-1})/(1-t)$.  

{\rm (d)} ${\rm reg}\, S[u]/I(Y)=\sum_{i=1}^{n}(d_i-1)$ and 
${\rm deg}(S[u]/I(Y))=|Y|=d_1\cdots d_n$.
\end{proposition}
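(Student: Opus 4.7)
The plan is to prove (a) first and read off parts (b)--(d) from standard complete-intersection facts. Set $F_i := \prod_{\gamma \in A_i}(t_i - u\gamma) \in S[u]$, which is homogeneous of degree $d_i$. The inclusion $(F_1,\ldots,F_n) \subseteq I(Y)$ is immediate: at a point $[(\gamma_1,\ldots,\gamma_n,1)] \in Y$ with $\gamma_i \in A_i$, the factor $(t_i - u\gamma_i)$ of $F_i$ evaluates to $\gamma_i - \gamma_i = 0$. For the reverse inclusion, I would first verify that $F_1,\ldots,F_n$ form a regular sequence in $S[u]$: their leading monomials $t_1^{d_1},\ldots,t_n^{d_n}$ (with respect to any monomial order in which $u$ is smallest) are pairwise coprime, so $\{F_1,\ldots,F_n\}$ is a Gr\"obner basis by Buchberger's criterion, and the initial ideal $(t_1^{d_1},\ldots,t_n^{d_n})$ is generated by a regular sequence, hence so is $F_1,\ldots,F_n$. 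Consequently $(F_1,\ldots,F_n)$ is a Cohen-Macaulay ideal of height $n$, and $S[u]/(F_1,\ldots,F_n)$ has Hilbert series $\prod_{i=1}^n(1-t^{d_i})/(1-t)^{n+1}$.

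Factoring $(1-t^{d_i}) = (1-t)(1+t+\cdots+t^{d_i-1})$ rewrites this as $\prod_{i=1}^n(1+t+\cdots+t^{d_i-1})/(1-t)$, whose numerator evaluated at $t=1$ is $d_1\cdots d_n = |Y|$ by Lemma~\ref{dec12-11}(a). Hence $\dim_K(S[u]/(F_1,\ldots,F_n))_d$ stabilizes at $|Y| = H_Y(d)$ for $d \gg 0$. Applying $\dim_K$ in degree $d$ to the short exact sequence
$$
0 \to I(Y)/(F_1,\ldots,F_n) \to S[u]/(F_1,\ldots,F_n) \to S[u]/I(Y) \to 0
$$
shows that $I(Y)/(F_1,\ldots,F_n)$ has eventually-zero Hilbert function. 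But $S[u]/(F_1,\ldots,F_n)$ is Cohen-Macaulay of pure Krull dimension one, so it has no embedded primes and any nonzero graded submodule again has Krull dimension one, forcing its Hilbert polynomial to be a positive constant. The kernel must therefore vanish, which proves (a).

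Parts (b)--(d) are then immediate. Part (b): $I(Y)$ is generated by a regular sequence of $n$ elements and has height $n$. Part (c): the Hilbert series of $S[u]/I(Y)$ is the one just computed for $S[u]/(F_1,\ldots,F_n)$. Part (d): with $h(t) := \prod_{i=1}^n(1+t+\cdots+t^{d_i-1})$ denoting the numerator of the reduced Hilbert series, the formulas recalled immediately before the proposition give $\mathrm{reg}\,S[u]/I(Y) = \deg h(t) = \sum_{i=1}^n(d_i-1)$ and $\deg(S[u]/I(Y)) = h(1) = d_1\cdots d_n$, which equals $|Y|$ by Lemma~\ref{dec12-11}(a).

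The main obstacle is the nontrivial inclusion $I(Y) \subseteq (F_1,\ldots,F_n)$ in part (a): the set-theoretic vanishing gives only the easy inclusion, and promoting this to an ideal-theoretic equality requires the complete-intersection/Cohen-Macaulay structure both to pin down the Hilbert function of $S[u]/(F_1,\ldots,F_n)$ in high degree and to rule out an eventually-trivial nonzero submodule. Once (a) is established, the remaining parts are routine bookkeeping with Hilbert series.
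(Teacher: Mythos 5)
Your proof is correct, but it reaches the key part (a) by a genuinely different route than the paper. The paper first determines the affine vanishing ideal, $I(X^*)=(f_1,\ldots,f_n)$ with $f_i=\prod_{\gamma\in A_i}(t_i-\gamma)$ (Lemma~\ref{epifania-dec29-11}, whose proof rests on the Combinatorial Nullstellensatz, Theorem~\ref{comb-null}), notes that the $f_i$ form a Gr\"obner basis for the reverse lexicographical order, and then invokes \cite[Lemma~3.7]{affine-codes}: homogenizing a Gr\"obner basis of $I(X^*)$ yields generators of the ideal $I(Y)$ of the projective closure. You bypass the affine ideal and the Nullstellensatz entirely: after verifying that $F_1,\ldots,F_n$ is a regular sequence (coprime leading terms, hence a Gr\"obner basis with a monomial complete-intersection initial ideal), you compare the Hilbert function of $S[u]/(F_1,\ldots,F_n)$, which stabilizes at $h(1)=d_1\cdots d_n$, with $H_Y(d)$, which stabilizes at $|Y|=d_1\cdots d_n$ by Lemma~\ref{dec12-11}(a) together with the standard fact about finite point sets quoted in the introduction, and then use unmixedness of the one-dimensional Cohen--Macaulay ring $S[u]/(F_1,\ldots,F_n)$ to exclude a nonzero graded submodule with eventually vanishing Hilbert function. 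Every step of this checks out; in particular the claim that a nonzero graded submodule must have Krull dimension one follows from $\mathrm{Ass}(M)\subseteq\mathrm{Ass}(S[u]/(F_1,\ldots,F_n))$ and the purity of the associated primes of a Cohen--Macaulay quotient. What the paper's route buys is reusability: Lemma~\ref{epifania-dec29-11} and the explicit affine generators are needed again later (e.g., in the division-algorithm step of Corollary~\ref{maria-vila-bound-affine}), so the Nullstellensatz computation is not wasted. What your route buys is self-containedness at the projective level --- no appeal to the external homogenization lemma --- at the cost of a slightly heavier dose of Cohen--Macaulay machinery; your derivation of (d) from the $h$-polynomial also replaces the citation of \cite[Corollary~2.6]{duursma-renteria-tapia} by the formulas the paper recalls just before the proposition, which is equivalent.
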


\begin{proof} (a) For $i=1,\ldots,n$, we set 
$f_i=\prod_{\gamma\in A_i}(t_i-\gamma)$. Let $\succ$ be the reverse
lexicographical order on the monomials of $S[u]$. Since
$f_1,\ldots,f_n$ form a Gr\"obner basis with respect
to this order, by 
Lemma~\ref{epifania-dec29-11} and \cite[Lemma~3.7]{affine-codes}, the
vanishing ideal $I(Y)$ is equal to 
$(f_1^h,\ldots,f_n^h)$, where $f_i^h=\prod_{\gamma\in A_i}(t_i-u\gamma)$
is the homogenization of $f_i$ with respect to a new variable $u$. Part
(b) follows from (a) because $I(Y)$ is an ideal of height $n$
\cite{geramita-cayley-bacharach}. (c) This part follows using
(a) and a well known formula for the
Hilbert series of a complete intersection (see
\cite[p.~104]{monalg}). 
(d) This part follows directly from \cite[Corollary~2.6]{duursma-renteria-tapia}.
\end{proof}

\begin{definition}\label{def-param-proj-code}
Let $\{Q_i\}_{i=1}^m$ be a set of representatives for the points of
$Y$. The map
$$
{\rm ev}'_d\colon S[u]_d\rightarrow K^{|Y|},\ \ \ \ \ 
f\mapsto
\left({f(Q_i)}/{f_0(Q_i)}\right)_{i=1}^m,
$$
where $f_0(t_1,\ldots,t_{n},u)=u^d$, defines a linear map of 
$K$-vector spaces. The image of ${\rm ev}'_d$, denoted by $C_Y(d)$, is
called a {\it projective evaluation code\/} of
degree $d$ on the set $Y$. 
\end{definition}

It is not hard to see that the map ${\rm ev}'_d$ is
independent of the set of representatives that we choose for the
points of $Y$. 

\begin{definition}
The {\it affine Hilbert function\/} of $S/I(X^*)$
is given by 
$$
H_{X^*}(d):=\dim_K\, S_{\leq d}/I(X^*)_{\leq d},\ \ \mbox{ where }\ \ 
I(X^*)_{\leq d}=S_{\leq d}\cap I(X^*).
$$
\end{definition}

As the evaluation map ${\rm ev}_d$ induces an isomorphism 
$S_{\leq d}/I(X^*)_{\leq d}\simeq C_{X^*}(d)$, as $K$-vector spaces,
the dimension of 
$C_{X^*}(d)$ is $H_{X^*}(d)$. 

\begin{lemma}{\rm\cite[Remark~5.3.16]{singular}}\label{affine-projective-hilbert}
$H_{X^*}(d)=H_Y(d)$ 
for $d\geq 0$. 
\end{lemma}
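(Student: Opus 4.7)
The plan is to establish a $K$-linear isomorphism between $S_{\leq d}/I(X^*)_{\leq d}$ and $S[u]_d/I(Y)_d$ via the classical homogenization/dehomogenization correspondence, so that both sides have the same dimension.

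First, I would define the homogenization map
$$
\phi\colon S_{\leq d}\longrightarrow S[u]_d,\qquad f\longmapsto u^d f(t_1/u,\ldots,t_n/u),
$$
and the dehomogenization map $\psi\colon S[u]_d\to S_{\leq d}$ sending $F$ to $F(t_1,\ldots,t_n,1)$. A direct check shows that $\psi\circ\phi$ is the identity on $S_{\leq d}$ and, using that elements of $S[u]_d$ are homogeneous of degree exactly $d$, that $\phi\circ\psi$ is the identity on $S[u]_d$. Hence $\phi$ is a $K$-linear isomorphism.

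Next, I would show that $\phi(I(X^*)_{\leq d})=I(Y)_d$. The points of $Y$ are exactly the classes $[(\gamma,1)]$ with $\gamma\in X^*$, by the bijection of Lemma~\ref{dec12-11}(a). For $f\in I(X^*)_{\leq d}$, evaluation of $\phi(f)$ at $(\gamma,1)$ gives $1^d f(\gamma)=0$, so $\phi(f)\in I(Y)_d$. Conversely, given $F\in I(Y)_d$, the polynomial $\psi(F)=F(t_1,\ldots,t_n,1)$ has degree at most $d$ and vanishes on $X^*$ since $F$ vanishes on all points $[(\gamma,1)]$ of $Y$; thus $\psi(F)\in I(X^*)_{\leq d}$ and $\phi(\psi(F))=F$, proving the reverse inclusion.

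Combining these, $\phi$ descends to a $K$-linear isomorphism
$$
\overline{\phi}\colon S_{\leq d}/I(X^*)_{\leq d}\;\longrightarrow\; S[u]_d/I(Y)_d,
$$
and taking dimensions gives $H_{X^*}(d)=H_Y(d)$ for every $d\geq 0$. The only nontrivial point is the equality $\phi(I(X^*)_{\leq d})=I(Y)_d$; this reduces to the set-level bijection $X^*\leftrightarrow Y$ and the fact that the affine coordinate of every representative of a point of $Y$ can be normalized to $u=1$, which is why no ``points at infinity'' appear and no degree is lost in the dehomogenization. All other steps are routine bookkeeping with the homogenization operator.
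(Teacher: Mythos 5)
Your argument is correct and complete. Note, however, that the paper does not actually prove this lemma: it is stated as a citation to \cite[Remark~5.3.16]{singular}, where the general fact is that the affine Hilbert function of $S/I$ agrees with the Hilbert function of $S[u]/I^h$ for the homogenized ideal $I^h$ (an argument that in general requires Gr\"obner-basis machinery, and which the paper implicitly combines with Proposition~\ref{ci-summary-i(y)}(a) to identify $I(X^*)^h$ with $I(Y)$). Your route is more elementary and self-contained: because $Y$ is by definition the finite set $\{[(\gamma,1)]:\gamma\in X^*\}$, every point has a representative with last coordinate $1$, there are no points at infinity, and $I(Y)_d$ is exactly the space of degree-$d$ forms vanishing on $Y$; this lets the degree-$d$ homogenization $f\mapsto u^d f(t_1/u,\ldots,t_n/u)$ and the dehomogenization $F\mapsto F(t_1,\ldots,t_n,1)$ be checked directly to be mutually inverse and to match $I(X^*)_{\leq d}$ with $I(Y)_d$, whence the quotients are isomorphic. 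What your approach buys is a short proof valid for any finite point set in affine space over any field, without invoking the homogenization theory of arbitrary ideals; what it gives up is the generality of the cited remark, which applies to non-reduced and positive-dimensional situations where one cannot identify graded pieces of the vanishing ideal so naively.
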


In particular, from this lemma, the dimension and the length of
the cartesian code $C_{X^*}(d)$ are  $H_Y(d)$ and ${\rm
deg}(S[u]/I(Y))$, respectively.

\begin{proposition}
\label{bridge-affine-projective}
$C_{X^*}(d)=C_Y(d)$ for $d\geq 1$. 
\end{proposition}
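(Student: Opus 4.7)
My plan is to use the obvious bijection between $X^*$ and $Y$ to turn one evaluation map into the other. Since every point of $Y$ has a representative of the form $(\gamma_1,\ldots,\gamma_n,1)$, I would begin by taking as the fixed set of representatives $Q_i = (P_i,1)$, where $P_1,\ldots,P_m$ are the points of $X^*$. With this choice one has $f_0(Q_i)=1^d=1$ for every $i$, so the projective evaluation map simplifies to
$$
{\rm ev}'_d(F) = \bigl(F(P_1,1),\ldots,F(P_m,1)\bigr), \qquad F \in S[u]_d.
$$
This matches the shape of the affine evaluation map ${\rm ev}_d$, just with the extra variable $u$ specialized to $1$.

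Next I would establish each inclusion separately. For $C_Y(d)\subseteq C_{X^*}(d)$: given a homogeneous $F \in S[u]_d$, its dehomogenization $f(t_1,\ldots,t_n) := F(t_1,\ldots,t_n,1)$ has total degree at most $d$ and thus lies in $S_{\leq d}$. By construction $f(P_i) = F(P_i,1)$, so ${\rm ev}'_d(F) = {\rm ev}_d(f)$, and the code word lies in $C_{X^*}(d)$.

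For the reverse inclusion $C_{X^*}(d)\subseteq C_Y(d)$: given $f\in S_{\leq d}$, form the standard homogenization $F := u^d f(t_1/u,\ldots,t_n/u)\in S[u]_d$ (using $d\geq 1$ so that $u^d$ absorbs every fractional power). Evaluating at the chosen representatives gives $F(P_i,1) = 1^d\cdot f(P_i) = f(P_i)$, hence ${\rm ev}_d(f) = {\rm ev}'_d(F)\in C_Y(d)$.

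Neither step presents any real obstacle; the only point requiring a little care is observing that dehomogenization lands in $S_{\leq d}$ (not just $S_{=d}$) and that homogenization of a polynomial of degree strictly less than $d$ still gives a genuine element of $S[u]_d$, which is exactly why the hypothesis $d\geq 1$ (ensuring the extra factors of $u$ are available) is stated. The independence of ${\rm ev}'_d$ from the choice of representatives, noted just before the proposition, guarantees that working with the specific representatives $(P_i,1)$ loses no generality.
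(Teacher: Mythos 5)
Your proof is correct, and it differs from the paper's in how it handles the inclusion $C_{X^*}(d)\subseteq C_Y(d)$. The paper proves only the inclusion $C_Y(d)\subseteq C_{X^*}(d)$ directly (by dehomogenization, exactly as you do), and then gets the reverse inclusion for free from a dimension count: since $S[u]_d/I(Y)_d\simeq C_Y(d)$ and $S_{\leq d}/I(X^*)_{\leq d}\simeq C_{X^*}(d)$, Lemma~\ref{affine-projective-hilbert} (the equality $H_{X^*}(d)=H_Y(d)$, imported from the literature) shows the two codes have the same dimension, so one inclusion forces equality. You instead prove the second inclusion directly by homogenizing $f\in S_{\leq d}$ to $F=u^df(t_1/u,\ldots,t_n/u)=\sum_a c_at^au^{d-|a|}\in S[u]_d$ and checking $F(P_i,1)=f(P_i)$; this is a perfectly valid computation and makes the proposition entirely self-contained, not relying on the Hilbert-function lemma at all. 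The trade-off is that the paper's route reuses machinery it needs anyway for the dimension formula, while yours is more elementary. One small quibble: the hypothesis $d\geq 1$ is not actually what makes the homogenization work --- $u^{d-|a|}$ is a genuine monomial whenever $d\geq\deg f$, including $d=0$ for constant $f$ --- so your parenthetical justification there is a harmless misattribution rather than a gap.
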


\begin{proof} Since $S[u]_d/I(Y)_d \simeq C_Y(d)$ and $S_{\leq
d}/I(X^*)_{\leq d} \simeq C_{X^*}(d)$, by
Lemma~\ref{affine-projective-hilbert}, we get that the linear codes $C_{X^*}(d)$ and
$C_Y(d)$ have the same dimension, and the same length. Thus, it suffices to show the
inclusion ``$\supset$''. Any point of $C_{Y}(d)$ has the form
$W=(f(P_i,1))_{i=1}^m$, 
where $P_1,\ldots,P_m$ are the points of $X^*$ and $f\in S[u]_{d}$.
If $\widetilde{f}$ is the polynomial $f(t_1,\ldots,t_n,1)$, then
$\widetilde{f}$ is in $S_{\leq d}$ and 
$f(P_i,1)=\widetilde{f}(P_i)$ for all $i$. Thus, $W$ is in
$C_{X^*}(d)$, as required.
\end{proof}

\section{Cartesian evaluation codes}\label{cartesian-section}
In this section we compute the basic parameters of cartesian codes and
give some applications. If $d$ is at most $\sum_{i=1}^n(d_i-1)$, we
show an upper bound in terms of 
$d_1,\ldots,d_n$ on the  
number of roots, over $X^*$, of polynomials in
$S_{\leq d}$ which do not vanish at all points of $X^*$. 

We begin by computing some of the basic parameters of $C_{X^*}(d)$,
the cartesian evaluation code of degree $d$ on $X^*$.

\begin{theorem}\label{dim-length} The length of $C_{X^*}(d)$ is
$d_1\cdots d_n$, its minimum distance is $1$ 
for $d\geq \sum_{i=1}^n(d_i-1)$, and its
dimension is 
\begin{eqnarray*}
& & H_{X^*}(d)=\binom{n+d}{d}-\sum\limits_{1\leq i\leq n}
\binom{n+d-d_i}{d-d_i}+\sum\limits_{i<j}
\binom{n+d-(d_i+d_j)}{d-(d_i+d_j)}-\\
&&  \sum\limits_{i<j<k}
\binom{n+d-(d_i+d_j+d_k)}{d-(d_i+d_j+d_k)} + \cdots + (-1)^n
\binom{n+d-(d_1+\cdots +d_n)}{d-(d_1+ \cdots +d_n)}.
\end{eqnarray*}
\end{theorem}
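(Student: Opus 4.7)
The plan is to prove each of the three assertions separately, reducing each to the algebraic information about $S[u]/I(Y)$ that has already been assembled in Section~\ref{ci-section}. The key identification is $\dim_K C_{X^*}(d) = H_{X^*}(d) = H_Y(d)$, which follows from the discussion just after Definition~\ref{def-param-proj-code} together with Lemma~\ref{affine-projective-hilbert}.

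The length of $C_{X^*}(d)$ equals $|X^*|$ by definition, and $|X^*| = d_1 \cdots d_n$ by Lemma~\ref{dec12-11}(a). For the minimum distance statement, I would invoke Proposition~\ref{ci-summary-i(y)}(d), which gives ${\rm reg}\,(S[u]/I(Y)) = \sum_{i=1}^n (d_i - 1)$. Since $H_Y(d) = |Y|$ precisely for $d \geq {\rm reg}\,(S[u]/I(Y))$, the evaluation map ${\rm ev}_d$ is surjective once $d \geq \sum_{i=1}^n (d_i - 1)$, so $C_{X^*}(d) = K^{|X^*|}$ contains every standard basis vector and therefore has a codeword of weight $1$, forcing $\delta_{X^*}(d) = 1$.

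For the dimension formula, my approach is to expand the Hilbert series from Proposition~\ref{ci-summary-i(y)}(c), written in the form
$$F_Y(t) = \frac{\prod_{i=1}^n (1 - t^{d_i})}{(1-t)^{n+1}}.$$
Expanding the numerator by inclusion-exclusion over subsets $J \subseteq \{1,\ldots,n\}$, letting $d_J := \sum_{i \in J} d_i$, and using $(1-t)^{-(n+1)} = \sum_{e \geq 0} \binom{n+e}{n} t^e$, extraction of the coefficient of $t^d$ gives
$$H_Y(d) = \sum_{J \subseteq \{1,\ldots,n\}} (-1)^{|J|} \binom{n + d - d_J}{n},$$
with the convention that terms indexed by $J$ with $d_J > d$ contribute $0$. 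Grouping the subsets by cardinality and rewriting $\binom{n+e}{n}$ as $\binom{n+e}{e}$ yields the theorem's formula verbatim.

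The main obstacle here is essentially nil: all of the substantive content (that $I(Y)$ is a complete intersection with generators of degrees $d_1,\ldots,d_n$, its Hilbert series, and its regularity) is already packaged in Proposition~\ref{ci-summary-i(y)}. The only care needed is in aligning the conventions for binomial coefficients with negative lower arguments and in checking that the grouping of terms by $|J|$ reproduces the alternating sum exactly as stated.
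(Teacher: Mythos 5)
Your proposal is correct, and for the length and the minimum distance it follows exactly the paper's route: length via Lemma~\ref{dec12-11}(a), and minimum distance~$1$ by combining the regularity computation of Proposition~\ref{ci-summary-i(y)}(d) with Lemma~\ref{affine-projective-hilbert} to conclude $C_{X^*}(d)=K^{|X^*|}$ for $d\geq\sum_{i=1}^n(d_i-1)$.

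The one place you diverge is the dimension formula. The paper simply observes that $I(Y)$ is a complete intersection generated by forms of degrees $d_1,\ldots,d_n$ and quotes \cite[Corollary~2.6]{duursma-renteria-tapia}, which packages the coefficient extraction for them. You instead take the Hilbert series of Proposition~\ref{ci-summary-i(y)}(c), rewrite it as $\prod_{i=1}^n(1-t^{d_i})/(1-t)^{n+1}$, expand the numerator by inclusion--exclusion over subsets $J$ with $d_J=\sum_{i\in J}d_i$, and read off the coefficient of $t^d$ as $\sum_J(-1)^{|J|}\binom{n+d-d_J}{n}$, with the convention that terms with $d_J>d$ vanish. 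Grouping by $|J|$ and using $\binom{n+e}{n}=\binom{n+e}{e}$ gives the stated alternating sum. This is a valid and self-contained replacement for the citation --- it is in effect a proof of the special case of the quoted corollary that is needed here --- at the cost of having to be explicit about the convention for binomial coefficients with negative lower index, which you correctly flag. Both arguments rest on the same substantive input, namely that $I(Y)$ is a complete intersection with the stated generator degrees.
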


\begin{proof} The length of $C_{X^*}(d)$ is $|X^*|=d_1\cdots d_n$. We
set $r=\sum_{i=1}^n(d_i-1)$. By
Proposition~\ref{ci-summary-i(y)}, the regularity of $S[u]/I(Y)$ is
equal to $r$, i.e., $H_Y(d)=|Y|$ for $d\geq r$. Thus, by
Lemmas~\ref{dec12-11} and \ref{affine-projective-hilbert},
$H_{X^*}(d)=|X^*|$ for $d\geq r$, i.e., $C_{X^*}(d)=K^{|X^*|}$ for
$d\geq r$. Hence $\delta_{X^*}(d)=1$ 
for $d\geq r$. 
By Proposition~\ref{ci-summary-i(y)}, the ideal $I(Y)$
is a complete intersection generated by $n$ homogeneous polynomials
$f_1,\ldots,f_n$ of 
degrees $d_1,\ldots,d_n$. Thus, applying
\cite[Corollary~2.6]{duursma-renteria-tapia} and using the equality 
$H_{X^*}(d)=H_Y(d)$, we obtain the required formula for the dimension.
\end{proof}

\begin{proposition}\label{hiram} If $d_1=1$ and $X'=A_2\times\cdots\times A_n$, 
then $C_{X^*}(d)=C_{X'}(d)$ for $d\geq 1$. 
\end{proposition}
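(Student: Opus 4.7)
The plan is to exploit the fact that $d_1=1$ forces $A_1$ to be a single element, say $A_1=\{\alpha\}$. This yields a natural bijection $\phi\colon X'\to X^*$, $(\gamma_2,\ldots,\gamma_n)\mapsto(\alpha,\gamma_2,\ldots,\gamma_n)$. Ordering the points of $X^*$ and $X'$ compatibly through $\phi$ identifies $K^{|X^*|}$ with $K^{|X'|}$, so the claim becomes an equality of two subspaces in the same ambient vector space, which I would establish by proving both inclusions.

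For $C_{X^*}(d)\subseteq C_{X'}(d)$, given any $f\in S_{\leq d}=K[t_1,\ldots,t_n]_{\leq d}$, I would consider the specialization $\widetilde f:=f(\alpha,t_2,\ldots,t_n)$. Since replacing a variable by a scalar cannot increase the total degree, $\widetilde f$ lies in $K[t_2,\ldots,t_n]_{\leq d}$. By construction, $\widetilde f(\gamma_2,\ldots,\gamma_n)=f(\alpha,\gamma_2,\ldots,\gamma_n)$ for every $(\gamma_2,\ldots,\gamma_n)\in X'$, so ${\rm ev}_d(f)$ and the evaluation of $\widetilde f$ on $X'$ coincide as vectors under the identification induced by $\phi$.

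The reverse inclusion $C_{X'}(d)\subseteq C_{X^*}(d)$ is even simpler: any polynomial $g\in K[t_2,\ldots,t_n]_{\leq d}$ can be regarded as an element of $S_{\leq d}$ that simply does not involve $t_1$, and its value at $(\alpha,\gamma_2,\ldots,\gamma_n)$ equals its value at $(\gamma_2,\ldots,\gamma_n)$, yielding the same codeword.

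There is essentially no real obstacle here beyond careful bookkeeping; the proposition reflects the principle that a coordinate pinned to a constant contributes no additional information to the code. The only point requiring attention is to align the orderings of the points of $X^*$ and $X'$ through $\phi$, so that the two codes are compared as subspaces of a single vector space rather than merely as unordered sets of coordinate tuples.
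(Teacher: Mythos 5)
Your proof is correct, and the core step is the same as the paper's: the specialization $\widetilde f = f(\alpha,t_2,\ldots,t_n)$ gives the inclusion $C_{X^*}(d)\subseteq C_{X'}(d)$ exactly as in the published argument. Where you diverge is in the reverse inclusion. The paper does not prove $C_{X'}(d)\subseteq C_{X^*}(d)$ directly; instead it invokes Proposition~\ref{ci-summary-i(y)} to write down generators of $I(Y)$ and $I(Y')$, observes that $S[u]/I(Y)$ and $K[t_2,\ldots,t_n,u]/I(Y')$ have the same Hilbert function, concludes that the two codes have equal dimension, and then deduces equality from the single inclusion. You instead note the elementary fact that any $g\in K[t_2,\ldots,t_n]_{\leq d}$ is already an element of $S_{\leq d}$ whose value at $(\alpha,\gamma_2,\ldots,\gamma_n)$ is $g(\gamma_2,\ldots,\gamma_n)$. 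Your route is more self-contained: it avoids the complete-intersection description of the vanishing ideals and the affine--projective Hilbert function comparison entirely, at the cost of nothing. The paper's route has the mild advantage of recording, along the way, that the two quotient rings share a Hilbert function, which is consistent with how the rest of the paper leans on these invariants; but as a proof of this proposition alone, your two-inclusion argument is cleaner. Your remark about aligning the point orderings via the bijection $\phi$ is the right bookkeeping point and is implicitly assumed in the paper as well.
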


\begin{proof} Let $\alpha$ be the only element of $A_1$ and let $Y'$
be the projective closure of $X'$. Then, by
Proposition~\ref{ci-summary-i(y)}, we get
$$
I(Y)=(t_1-u\alpha,f_2^h,\ldots,f_n^h)\ \mbox{ and }\
I(Y')=(f_2^h,\ldots,f_n^h),
$$
where $f_i^h=\prod_{\gamma\in A_i}(t_i-u\gamma)$ for $i=2,\ldots,n$.
Since $S[u]/I(Y)$ and $K[t_2,\ldots,t_n,u]/I(Y')$ have the
same Hilbert function, we get that the dimension and the length of
$C_{X^*}(d)$ and $C_{X'}(d)$ are the same. Thus, to show the equality
$C_{X^*}(d)=C_{X'}(d)$, it suffices to show the inclusion
``$\subset$''. Any element of $C_{X^*}(d)$ has the form
$$
W=(f(\alpha,Q_1),\ldots,f(\alpha,Q_m)),
$$
where $Q_1,\ldots,Q_m$ are the points of $X'$ and $f\in S_{\leq d}$.
If $\widetilde{f}$ is the polynomial $f(\alpha,t_2,\ldots,t_n)$, then
$\widetilde{f}$ is in $K[t_2,\ldots,t_n]_{\leq d}$ and 
$f(\alpha,Q_i)=\widetilde{f}(Q_i)$ for all $i$. Thus, $W$ is in
$C_{X'}(d)$, as required.
\end{proof}

Since permuting the
sets $A_1,\ldots,A_n$ does not affect neither the parameters of the 
corresponding cartesian evaluation codes, nor the invariants of the
corresponding vanishing ideal, by Proposition~\ref{hiram} we may always 
assume that $2\leq d_i\leq d_{i+1}$ for all $i$, where $d_i=|A_i|$.

For $G\in S$, we denote the zero set of $G$ in $X^*$ by $Z_{X^*}(G)$.
We begin with a general 
bound that will be refined later in this section. 
The proof of \cite[Lemma 3A, p.~147]{schmidt} can be easily adapted
to obtain the following auxiliary result.

\begin{lemma} \label{vila-vaz-pinto-sequence} Let $0\neq
G=G(t_1,\ldots,t_n)\in S$ be a polynomial of total degree $d$. If
$d_i\leq d_{i+1}$ for all $i$, then
$$
|Z_{X^*}(G)|\leq
\left\{\hspace{-1mm}\begin{array}{ll}
d_2\cdots d_nd&\mbox{ if }
n\geq 2,\\
d &\mbox{ if } n=1.
\end{array}
\right.
$$
\end{lemma}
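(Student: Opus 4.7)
The plan is to prove this by induction on $n$, following the standard adaptation of Schwartz--Zippel/Schmidt type arguments. For the base case $n=1$, the claim is simply that a nonzero univariate polynomial of degree $d$ has at most $d$ roots in $K$, hence at most $d$ roots in $A_1$.

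For the inductive step, I would regard $G$ as a polynomial in $t_1$ with coefficients in $K[t_2,\ldots,t_n]$, writing
\[
G=\sum_{j=0}^{k} G_{j}(t_2,\ldots,t_n)\,t_1^{j}, \qquad G_k\neq 0,
\]
where $0\le k\le d$ and $\deg G_k\le d-k$. Then I would partition $A_2\times\cdots\times A_n$ according to whether the leading coefficient $G_k$ vanishes at the point. If $G_k(a_2,\ldots,a_n)\neq 0$, the univariate polynomial $G(t_1,a_2,\ldots,a_n)$ has degree exactly $k$ and contributes at most $k$ zeros from $A_1$; if $G_k(a_2,\ldots,a_n)=0$, I use the trivial bound of $d_1=|A_1|$. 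Setting $N:=|Z_{A_2\times\cdots\times A_n}(G_k)|$, this yields
\[
|Z_{X^*}(G)|\le k\,(d_2\cdots d_n - N)+d_1 N = k\,d_2\cdots d_n+(d_1-k)N.
\]

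Now I would split into two cases. If $k\ge d_1$, then $(d_1-k)N\le 0$, so $|Z_{X^*}(G)|\le k\,d_2\cdots d_n\le d\,d_2\cdots d_n$ and we are done. If $k<d_1$, I would apply the inductive hypothesis to $G_k\in K[t_2,\ldots,t_n]$ (noting it has total degree at most $d-k$ and that the ordering $d_2\le d_3\le\cdots\le d_n$ is inherited), obtaining $N\le d_3\cdots d_n\,(d-k)$ when $n\ge 3$, and $N\le d-k$ when $n=2$. Substituting, the desired bound $|Z_{X^*}(G)|\le d\,d_2\cdots d_n$ reduces, after cancelling a factor of $(d-k)$ (the case $d=k$ being trivial), to the inequality $d_1-k\le d_2$, which holds because $d_1\le d_2$ and $k\ge 0$.

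The main obstacle is isolating why the hypothesis $d_i\le d_{i+1}$ is used: it enters precisely at the last step, where the inequality $d_1\le d_2$ is what makes the recursive estimate close up. Without this ordering, the extra contribution $(d_1-k)N$ in the small-$k$ case could exceed what $d_2\cdots d_n$ can absorb. Everything else is a straightforward bookkeeping exercise around the leading-coefficient trick.
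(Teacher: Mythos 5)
Your proof is correct, but it is not the argument the paper uses. The paper follows Schmidt's Lemma 3A verbatim: it inducts on $n+d$, substitutes the elements $\beta_k$ of $A_1$ into $t_1$, and splits into two cases according to whether some restriction $G(\beta_k,t_2,\ldots,t_n)$ vanishes identically --- if it does, $(t_1-\beta_k)$ is factored out and the induction proceeds on the total degree; if not, each slice is bounded by the inductive hypothesis in $n-1$ variables and the sum $d_1d_3\cdots d_n d$ is absorbed into $d_2\cdots d_n d$ via $d_1\le d_2$. You instead run a single induction on $n$ in the Schwartz--Zippel style: expand in $t_1$, partition $A_2\times\cdots\times A_n$ according to whether the leading coefficient $G_k$ vanishes, and use the inductive bound only on $G_k$. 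Your bookkeeping is sound: $\deg G_k\le d-k$, the estimate $|Z_{X^*}(G)|\le k\,d_2\cdots d_n+(d_1-k)N$ is valid (including $k=0$), the case $k\ge d_1$ is immediate, and the final reduction to $d_1-k\le d_2$ is exactly where the ordering hypothesis enters --- the same place the paper uses it in its Case (II). What your route buys is the elimination of the degree induction and the linear-factor extraction, giving a cleaner one-parameter induction; what the paper's route buys is that the factor-out-a-root mechanism it rehearses here is reused in an essential way in the refined bound of Proposition~\ref{maria-vila-bound} (via Lemma~\ref{making-sure}), so the two proofs in the paper share their machinery. Either argument establishes the lemma.
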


\begin{proof} By induction on $n+d\geq 1$. If $n+d=1$, then $n=1$,
$d=0$ and the result is obvious. If $n=1$, then the result is
clear because $G$ has at most $d$ roots in $K$. Thus, we may assume
$d\geq 1$ and $n\geq 2$. We can write $G$ as
\begin{equation}\label{maria-eq-1}
G=G(t_1,\ldots,t_n)=G_0(t_1,\ldots,t_{n-1})+G_1(t_1,\ldots,t_{n-1})t_n+\cdots+
G_r(t_1,\ldots,t_{n-1})t_n^r,\tag{$\dag$}
\end{equation}
where $G_r\neq 0$ and $0\leq r\leq d$. Let
$\beta_1,\ldots,\beta_{d_1}$ be the elements of $A_1$. We set
$$
H_k=H_k(t_2,\ldots,t_n):=G(\beta_k,t_2,\ldots,t_n)\ \ \mbox{ for }\ \ 1\leq k\leq d_1.
$$

Case (I): $H_k(t_2,\ldots,t_n)=0$ for some $1\leq k\leq d_1$. From
Eq.~(\ref{maria-eq-1}) we get
$$
H_k(t_2,\ldots,t_n)=G_0(\beta_k,t_2,\ldots,t_{n-1})+G_1(\beta_k,t_2,\ldots,t_{n-1})t_n+
\cdots+ G_r(\beta_k,t_2,\ldots,t_{n-1})t_n^r=0.
$$
Therefore $G_i(\beta_k,t_2,\ldots,t_{n-1})=0$ for $i=0,\ldots,r$.
Hence $t_1-\beta_k$ divides $G_i(t_1,\ldots,t_{n-1})$ for all $i$.
Thus, by Eq.~(\ref{maria-eq-1}), we can write
$$
G(t_1,\ldots,t_n)=(t_1-\beta_k)G'(t_1,\ldots,t_n)
$$
for some $G'\in S$. Notice that $\deg(G')+n=d-1+n<d+n$. Hence, by
induction, we get
$$
|Z_{X^*}(G)|\leq |Z_{X^*}{(t_1-\beta_k)}|+|Z_{X^*}(G'(t_1,\ldots,t_n))|\leq
d_2\cdots d_n+d_2\cdots d_n (d-1)=d_2\cdots d_nd.
$$

Case (II): $H_k(t_2,\ldots,t_n)\neq 0$ for $1\leq k\leq d_1$.
Observe the inclusion
$$
Z_{X^*}(G)\subset\bigcup_{k=1}^{d_1}(\{\beta_k\}\times
Z(H_k)),
$$
where $Z(H_k)=\{a\in A_2\times\cdots\times A_n\,
\vert\, H_k(a)=0\}$. As $\deg(H_k)
+n-1<d+n$ and $d_i\leq d_{i+1}$ for all $i$, then by induction 
$$
|Z_{X^*}(G)|\leq\sum_{k=1}^{d_1}|Z(H_k)|\leq
d_1d_3\cdots d_nd\leq d_2d_3\cdots d_nd,
$$
as required.
\end{proof}

\begin{lemma}\label{feb21-10} Let $d_1,\ldots,d_{n-1},d',d$ be
positive integers such that 
$d=\sum_{i=1}^k(d_i-1)+\ell$ and $d'=\sum_{i=1}^{k'}(d_i-1)+\ell'$
for some integers 
$k,k',\ell,\ell'$ satisfying that $0\leq k,k'\leq n-2$ and 
$1\leq\ell\leq d_{k+1}-1$, $1\leq\ell'\leq d_{k'+1}-1$. 
If $d'\leq d$ and $d_i\leq d_{i+1}$ for all $i$, then
$k'\leq k$ and
\begin{equation}\label{jan11-12}
-d_{k'+1}\cdots d_{n-1}+\ell'd_{k'+2}\cdots d_{n-1}\leq -d_{k+1}\cdots
d_{n-1}+\ell d_{k+2}\cdots d_{n-1},\tag{$*$}
\end{equation}
where $d_{k+2}\cdots d_{n-1}=1$
$($resp., $d_{k'+2}\cdots d_{n-1}=1${\rm)} if $k=n-2$ $($resp.,
$k'=n-2${\rm)}. 
\end{lemma}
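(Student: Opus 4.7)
The plan is to handle the two claims separately. For $k' \le k$, I would argue by contradiction: if instead $k' \ge k + 1$, then using $\ell' \ge 1$ and $\ell \le d_{k+1} - 1$,
$$d' \ge \sum_{i=1}^{k+1}(d_i - 1) + \ell' \ge \sum_{i=1}^{k}(d_i - 1) + d_{k+1} > \sum_{i=1}^k(d_i - 1) + \ell = d,$$
contradicting $d' \le d$.

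For inequality (\ref{jan11-12}), I would abbreviate $P_j := d_{j+1}\cdots d_{n-1}$ and $Q_j := d_{j+2}\cdots d_{n-1}$ (with empty products set to $1$, so $P_j = d_{j+1}\, Q_j$), and set $\phi(j, m) := -P_j + m\, Q_j$, so the target inequality reads $\phi(k', \ell') \le \phi(k, \ell)$. The key observation is that $(j, m) \mapsto \sum_{i=1}^{j}(d_i - 1) + m$ is a strictly increasing bijection from admissible pairs (those with $0 \le j \le n-2$ and $1 \le m \le d_{j+1} - 1$) onto integers in a consecutive range; since $d' \le d$ means $(k', \ell')$ precedes $(k, \ell)$ in this ordering, it suffices to show that $\phi$ is non-decreasing along this ordering.

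Non-decreasingness splits into two elementary telescoping checks. First, within a fixed $j$, $\phi(j, m+1) - \phi(j, m) = Q_j \ge 1$. Second, at the transition from $(j, d_{j+1} - 1)$ to $(j+1, 1)$, the identity $Q_j = d_{j+2}\, Q_{j+1}$ yields $\phi(j, d_{j+1}-1) = -Q_j$ and $\phi(j+1, 1) = -(d_{j+2}-1)Q_{j+1}$, so the difference is $Q_{j+1} \ge 1$. Iterating these one-step inequalities along the path from $(k', \ell')$ up to $(k, \ell)$ gives $\phi(k', \ell') \le \phi(k, \ell)$.

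The only point requiring care, rather than a genuine obstacle, is the empty-product convention at the boundary $j = n-2$ (where $Q_{n-2} = 1$), which is precisely what makes the formulas in the statement consistent; all the identities above remain valid under this convention.
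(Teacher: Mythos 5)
Your proof is correct. The first claim ($k'\leq k$) is established exactly as in the paper: both arguments derive a contradiction with $\ell\leq d_{k+1}-1$ (equivalently, force $d'>d$) from the assumption $k'\geq k+1$ together with $\ell'\geq 1$. For the inequality $(*)$, however, you take a genuinely different route. The paper cancels the common factor $d_{k+2}\cdots d_{n-1}$ from both sides, reducing $(*)$ to $d_{k+1}-\ell\leq(d_{k'+1}-\ell')d_{k'+2}\cdots d_{k+1}$, and then disposes of this in two quick cases: if $k=k'$ it follows from $\ell\geq\ell'$, and if $k\geq k'+1$ it follows from the crude chain $d_{k+1}-\ell\leq d_{k+1}\leq d_{k'+2}\cdots d_{k+1}\leq d_{k'+2}\cdots d_{k+1}(d_{k'+1}-\ell')$. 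You instead place all admissible pairs $(j,m)$ on a single chain via the bijection $(j,m)\mapsto\sum_{i=1}^{j}(d_i-1)+m$ and verify that $\phi(j,m)=-P_j+mQ_j$ increases by $Q_j\geq 1$ (within a block) or $Q_{j+1}\geq 1$ (across a block boundary) at each consecutive step; your boundary computation $\phi(j,d_{j+1}-1)=-Q_j$ and $\phi(j+1,1)=-(d_{j+2}-1)Q_{j+1}$ is correct, and the empty-product conventions match those in the statement. The paper's argument is shorter; yours is more structural, makes the monotonicity of the bound in $d$ transparent, and in fact yields the slightly stronger quantitative conclusion $\phi(k,\ell)-\phi(k',\ell')\geq d-d'$, which the paper's two-case estimate does not record. (Minor wording point, not a gap: when $d'=d$ the pair $(k',\ell')$ equals $(k,\ell)$ rather than strictly preceding it, and your telescoping sum is then empty, so the conclusion still holds.)
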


\begin{proof} First we show that $k'\leq k$. If $k'>k$, from the
equality 
$$
\ell=(d-d')+\ell'+[(d_{k+1}-1)+\cdots+(d_{k'+1}-1)],
$$
we obtain that $\ell\geq d_{k+1}$, a contradiction. Thus, $k'\leq k$.
Since $d_{k+2}\cdots d_{n-1}$ is a common factor of each term of
Eq.~(\ref{jan11-12}), we need only show the equivalent inequality:
\begin{equation}\label{jan11-12-1}
d_{k+1}-\ell\leq (d_{k'+1}-\ell')d_{k'+2}\cdots d_{k+1}.\tag{$**$}
\end{equation}
If $k=k'$, then $d_{k'+2}\cdots d_{k+1}=1$ and $d-d'=\ell-\ell'\geq 0$. Hence,
$\ell\geq \ell'$ and Eq.~(\ref{jan11-12-1}) holds. If $k\geq
k'+1$, then 
$$
d_{k+1}-\ell\leq d_{k+1}\leq d_{k'+2}\cdots d_{k+1}\leq
d_{k'+2}\cdots d_{k+1}(d_{k'+1}-\ell').
$$
Thus, Eq.~(\ref{jan11-12-1}) holds. 
\end{proof}

\begin{lemma}\label{making-sure} If\/ $0\neq G\in S$. Then, there are
$r\geq 0$ distinct elements 
$\beta_1,\ldots,\beta_r$ in $A_n$ and $G'\in S$ such that
$$
G=(t_n-\beta_1)^{a_1}\cdots(t_n-\beta_r)^{a_r}G',\ \ \ \ \ a_i\geq
1\mbox{ for all } i, 
$$
and $G'(t_1,\ldots,t_{n-1},\gamma)\neq 0$ for any $\gamma\in A_n$.
\end{lemma}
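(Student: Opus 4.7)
The plan is to extract, for each $\beta \in A_n$, the largest power of the linear polynomial $t_n-\beta$ that divides $G$, and then argue that what remains cannot vanish when $t_n$ is specialized to any element of $A_n$.

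First I would view $S = R[t_n]$ where $R := K[t_1,\ldots,t_{n-1}]$. Since $R$ is an integral domain (in fact a UFD), each polynomial $t_n - \beta$ with $\beta \in K$ is monic of degree $1$ in $t_n$ and is therefore prime in $R[t_n]$; moreover, distinct elements $\beta, \beta' \in K$ give non-associate primes. Let $\beta_1,\ldots,\beta_r$ be the elements of $A_n$ for which $(t_n-\beta_i)$ divides $G$ in $S$, and for each $i$ let $a_i \geq 1$ be the largest integer such that $(t_n-\beta_i)^{a_i}$ divides $G$. (Such a largest $a_i$ exists because $\deg_{t_n}(G)$ bounds it.) By unique factorization in $S$ and the coprimality of the $t_n-\beta_i$'s, we can write
$$
G = (t_n-\beta_1)^{a_1}\cdots(t_n-\beta_r)^{a_r}\, G'
$$
for a unique $G' \in S$, and by maximality of the $a_i$'s, $(t_n-\beta_i) \nmid G'$ for $i=1,\ldots,r$.

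Next I would check that $(t_n-\gamma) \nmid G'$ for every $\gamma \in A_n$. For $\gamma = \beta_i$ this is the maximality just noted, and for $\gamma \in A_n \setminus \{\beta_1,\ldots,\beta_r\}$ it holds because $(t_n-\gamma) \nmid G$ by the choice of the $\beta_i$'s, so \emph{a fortiori} $(t_n-\gamma) \nmid G'$.

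Finally, I would invoke the factor theorem inside the polynomial ring $R[t_n]$: dividing by the monic polynomial $t_n-\gamma$ we can write $G' = (t_n-\gamma)Q + \rho$ with $Q \in R[t_n]$ and $\rho \in R$, and specializing $t_n=\gamma$ gives $\rho = G'(t_1,\ldots,t_{n-1},\gamma)$. Hence $(t_n-\gamma) \mid G'$ in $S$ if and only if $G'(t_1,\ldots,t_{n-1},\gamma) = 0$ in $R$. Since we have just shown $(t_n-\gamma) \nmid G'$ for every $\gamma \in A_n$, we conclude $G'(t_1,\ldots,t_{n-1},\gamma) \neq 0$ for every such $\gamma$, as required. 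There is no real obstacle here beyond being careful that the factor theorem is applied over the coefficient ring $R = K[t_1,\ldots,t_{n-1}]$ rather than over the field $K$, which is legitimate because the divisor $t_n - \gamma$ is monic in $t_n$.
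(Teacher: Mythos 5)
Your proof is correct. It takes a mildly different route from the paper's: the paper argues by induction on $\deg(G)$, peeling off one linear factor at a time --- if $G(t_1,\ldots,t_{n-1},\gamma)=0$ for some $\gamma\in A_n$, division by $t_n-\gamma$ leaves a remainder $H\in K[t_1,\ldots,t_{n-1}]$ that must vanish, so $G=(t_n-\gamma)F$ with $\deg(F)<\deg(G)$, and one recurses. You instead extract all the factors in one step, by observing that each $t_n-\beta$ is a prime of $R[t_n]$ with $R=K[t_1,\ldots,t_{n-1}]$, that distinct $\beta$'s give non-associate primes, and that unique factorization then yields the decomposition with the maximal exponents $a_i$. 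The two arguments share the same kernel --- the factor theorem for the monic polynomial $t_n-\gamma$ over the coefficient ring $R$, which identifies vanishing of $G'(t_1,\ldots,t_{n-1},\gamma)$ with divisibility by $t_n-\gamma$ --- but yours makes the structure of the factorization (existence and maximality of the exponents $a_i$, uniqueness of $G'$) explicit, at the cost of invoking the UFD property of $S$, while the paper's induction is more elementary and self-contained. Your care in noting that the division is performed over $R$ rather than over $K$, and that this is legitimate because $t_n-\gamma$ is monic in $t_n$, is exactly the point that makes both arguments work.
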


\begin{proof} Fix a monomial ordering in $S$.   
If the degree of $G$ is zero, 
we set $r=0$ and $G=G'$. Assume that $\deg(G)>0$. If
$G(t_1,\ldots,t_{n-1},\gamma)\neq 0$ for all 
$\gamma\in A_n$, we set $G=G'$ and $r=0$. If
$G(t_1,\ldots,t_{n-1},\gamma)=0$ 
for some $\gamma\in A_n$, then by the division algorithm there are $F$
and $H$ in $S$ such that $G=(t_n-\gamma)F+H$, where $H$ is a
polynomial whose terms are not divisible by the leading term of
$t_n-\gamma$, i.e., $H$ is a
polynomial in $K[t_1,\ldots,t_{n-1}]$. 
Thus, as $G(t_1,\ldots,t_{n-1},\gamma)=0$,
we get that $H=0$ and $G=(t_n-\gamma)F$. Since $\deg(F)<\deg(G)$, the
result follows using induction on the total degree of $G$.
\end{proof}

\begin{proposition}\label{maria-vila-bound}
Let $G=G(t_1,\ldots,t_n)\in S$ be a polynomial of total degree
$d\geq 1$ such that $\deg_{t_i}(G)\leq d_i-1$ for $i=1,\ldots,n$. If
$d_i\leq d_{i+1}$ for all $i$ and $d=\sum_{i=1}^k(d_i-1)+\ell$ for
some integers $k,\ell$ such that $1\leq \ell\leq d_{k+1}-1$, $0\leq k\leq n-1$,
then
$$
|Z_{X^*}(G)|\leq d_{k+2}\cdots d_n(d_1\cdots d_{k+1}-d_{k+1}+\ell),
$$
where we set $d_{k+2}\cdots d_n=1$ if $k=n-1$.
\end{proposition}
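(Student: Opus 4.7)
My plan is to prove the bound by induction on $n$. The base case $n=1$ is the classical observation that a nonzero univariate polynomial of degree $d=\ell$ (with $k=0$) has at most $\ell$ roots in $K$, matching the asserted bound $d_1-d_1+\ell=\ell$.

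For the inductive step with $n \geq 2$, first apply Lemma~\ref{making-sure} to $G$ with respect to the variable $t_n$, writing
\[
G = (t_n-\beta_1)^{a_1}\cdots(t_n-\beta_r)^{a_r}\,G',
\]
where $\beta_1,\ldots,\beta_r$ are distinct elements of $A_n$ and $G'(t_1,\ldots,t_{n-1},\gamma)\neq 0$ for every $\gamma\in A_n$. Setting $s = a_1+\cdots+a_r$, one has $r\leq s\leq \deg_{t_n}(G)\leq d_n-1$. Partitioning $Z_{X^*}(G)$ by the value of $t_n$ yields
\[
|Z_{X^*}(G)| = r\cdot d_1\cdots d_{n-1} + \sum_{\gamma\in A_n\setminus\{\beta_1,\ldots,\beta_r\}} |Z_{X_{n-1}}(G'(\cdot,\gamma))|,
\]
where $X_{n-1}=A_1\times\cdots\times A_{n-1}$. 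Each $G'(\cdot,\gamma)\in K[t_1,\ldots,t_{n-1}]$ is nonzero with $\deg_{t_i}\leq d_i-1$ and total degree at most $d-s$.

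I would then apply the inductive hypothesis to each $G'(\cdot,\gamma)$ whose effective degree is at least $1$; in the saturated case, Lemma~\ref{dec12-11}(c) forces $|Z_{X_{n-1}}(G'(\cdot,\gamma))|\leq d_1\cdots d_{n-1}-1$. By the monotonicity provided by Lemma~\ref{feb21-10}, one obtains a uniform upper bound $B\leq d_1\cdots d_{n-1}$ valid for every $\gamma$. Since $r\cdot d_1\cdots d_{n-1} + (d_n-r)B$ is non-decreasing in $r$, the worst case is $r=s$. After rearrangement the desired inequality reduces to an algebraic statement of the form
\[
(d_n-s)\cdot d_{k_0+2}\cdots d_{n-1}(d_{k_0+1}-\ell_0) \geq d_{k+2}\cdots d_n(d_{k+1}-\ell),
\]
where $(k_0,\ell_0)$ is the standard representation of $d-s$ (handling $d-s=0$ separately). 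Using Lemma~\ref{feb21-10} and the identity $A(d_{k_0+1}-\ell_0)=(d_{k+1}-\ell)+D$ with $A=d_{k_0+2}\cdots d_{k+1}$ and $D\geq 0$ (Eq.~\ref{jan11-12-1}), this collapses to $(d_n-s)D\geq s(d_{k+1}-\ell)$, which is verified using $s\leq d_n-1$ together with the elementary inequality $\prod(1+a_i)-1\geq \sum a_i$ applied to the factors of $A$.

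The main obstacle will be the case $k=n-1$, where $d$ exceeds the range where Lemma~\ref{feb21-10} applies. Here one argues directly using $s\leq d_n-1$ and Lemma~\ref{dec12-11}(c): the uniform slice bound $d_1\cdots d_{n-1}-1$ combined with the elementary estimate $P-1\geq\sum_{i=k_0+2}^{n-1}(d_i-1)$ (with $P=d_{k_0+2}\cdots d_{n-1}$) shows that both subcases $d-s\leq \sum_{i=1}^{n-1}(d_i-1)$ and $d-s>\sum_{i=1}^{n-1}(d_i-1)$ remain within the target bound $W$. Careful bookkeeping in this residual case is the most delicate part of the argument.
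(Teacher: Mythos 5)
Your plan reproduces the paper's proof almost step for step: induction on $n$, the factorization from Lemma~\ref{making-sure}, the slice-by-slice count over the values of $t_n$, the inductive bound on each slice made uniform via Lemma~\ref{feb21-10}, and a closing numerical inequality. Your deviations are cosmetic and harmless: tracking $s=a_1+\cdots+a_r$ instead of $r$, and pushing the count up to $r=s$ (valid since every slice bound is at most $d_1\cdots d_{n-1}$). The reduction of the target bound to $(d_n-s)\,d_{k_0+2}\cdots d_{n-1}(d_{k_0+1}-\ell_0)\geq d_{k+2}\cdots d_n(d_{k+1}-\ell)$, and then, for $k\leq n-2$, to $(d_n-s)D\geq s(d_{k+1}-\ell)$ with $D=A(d_{k_0+1}-\ell_0)-(d_{k+1}-\ell)\geq 0$, is correct algebra.

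The gap is in your one-clause justification of that last inequality: it does \emph{not} follow from $s\leq d_n-1$ together with $\prod(1+a_i)-1\geq\sum a_i$ alone. Writing $m=d_{k_0+1}-\ell_0$ and $M=d_{k+1}-\ell$, the inequality is equivalent to $(d_n-s)Am\geq d_nM$. Take $n=3$, $d_1=2$, $d_2=5$, $\ell=3$, $\ell_0=1$, so $m=1$, $M=2$, $A=5$, $T=4$, $s=3$, $D=3$; both of your stated ingredients hold ($A=T+1$ and $s\leq d_n-1$ whenever $d_n\geq 4$), yet at $d_n=4$ one gets $(d_n-s)D=3<6=sM$. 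What rescues the inequality is the standing hypothesis $d_{k+1}\leq\cdots\leq d_n$ (forcing $d_n\geq d_2=5$ here, where equality is attained), and even with it the verification genuinely requires a case split --- essentially the paper's Subcases (IV.b) (where $k_0=k$, $D=s$, and one needs $d_n-s\geq d_{k+1}-\ell$) versus (IV.c) (where $k_0<k$ and the product--sum inequality enters together with the term $\ell(d_n-d_{k+1})\geq 0$). So the skeleton is right, but the step you compress into one sentence is where the entire difficulty of the proposition lives, and the ingredients you name are insufficient to carry it out. The same remark applies to your residual case $k=n-1$, which you rightly flag as delicate: there the correct split is $s<\ell$ (the uniform bound $d_1\cdots d_{n-1}-1$ from Lemma~\ref{dec12-11}(c) suffices) versus $s\geq\ell$ (the inductive slice bound plus $d_{k_0+2}\cdots d_{n-1}-1\geq\sum_{i=k_0+2}^{n-1}(d_i-1)$ is needed, as in the paper's Subcase (IV.c.1)); also, the subcase $d-s=0$ that you defer is not entirely trivial and is handled in the paper as Case (III).
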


\begin{proof} We proceed by induction on $n$. By
Lemma~\ref{making-sure}, there are $r\geq 0$ distinct elements 
$\beta_1,\ldots,\beta_r$ in 
$A_n$ and $G'\in S$ such that
$$
G=(t_n-\beta_1)^{a_1}\cdots(t_n-\beta_r)^{a_r}G',\ \ \ \ \ a_i\geq
1\mbox{ for all } i, 
$$
and $G'(t_1,\ldots,t_{n-1},\gamma)\neq 0$ for any $\gamma\in A_n$. Notice
that $r\leq \sum_{i=1}^ra_i\leq d_n-1$ because the degree of $G$ in $t_n$ is
at most $d_n-1$. We may assume that
$A_n=\{\beta_1,\ldots,\beta_{d_n}\}$. Let $d_i'$ be the degree  
of $G'(t_1,\ldots,t_{n-1},\beta_i)$ and let $d'=\max\{d_i'\vert\ r+1\leq
i\leq d_n\}$. 

\underline{Case (I)}: Assume $n=1$. Then, $k=0$ and $d=\ell$. Then
$|Z_{X^*}(G)|\leq \ell$ because a non-zero polynomial in one variable
of degree $d$ has at 
most $d$ roots. 

\underline{Case (II)}: Assume $n\geq 2$ and $k=0$. Then, $d=\ell\leq d_1-1$.
Hence, by Lemma~\ref{vila-vaz-pinto-sequence}, we get
$$ 
|Z_{X^*}(G)|\leq d_2\cdots d_nd=d_2\cdots d_n\ell=d_{k+2}\cdots
d_n(d_1\cdots d_{k+1}-d_{k+1}+\ell),  
$$
as required. 

\underline{Case (III)}: Assume $n\geq 2$, $k\geq 1$ and $d'=0$.  Then, 
$|Z_{X^*}(G)|=rd_1\cdots d_{n-1}$. Thus, it suffices to show the 
inequality 
$$
rd_1\cdots d_{n-1}\leq d_{1}\cdots d_n-d_{k+1}\cdots d_{n}+\ell
d_{k+2}\cdots d_{n}.  
$$

All terms of this inequality have $d_{k+2}\cdots d_{n-1}$ as a common
factor. Hence, this case reduces to showing the following
equivalent inequality
$$
rd_1\cdots d_{k+1}\leq d_n(d_1\cdots d_{k+1}-d_{k+1}+\ell).
$$

We can write $d_n=r+1+\delta$ for some $\delta\geq 0$. If we
substitute $d_n$ by $r+1+\delta$, we get the equivalent inequality
$$
d_{k+1}(r+1)\leq \ell r+d_1\cdots d_{k+1}+\ell+\delta
d_1\cdots d_{k+1}-\delta d_{k+1}+\delta\ell .
$$

We can write $d=r+\delta_1$ for some $\delta_1\geq 0$. Next, if we
substitute $r$ by $\sum_{i=1}^{k}(d_i-1)+\ell-\delta_1$ on the left
hand side of this inequality, we get 
$$
0 \leq \ell[r+1+\delta-d_{k+1}]+d_{k+1}[d_1\cdots
d_{k}-1-\textstyle\sum_{i=1}^{k}(d_i-1)+\delta_1]+\delta[d_1\cdots
d_{k+1}-d_{k+1}].
$$
Since $r+1+\delta-d_{k+1}\geq r+1+\delta-d_{n}=0$ and $k\geq 1$, this
inequality 
holds. This completes the proof of this case.

\underline{Case (IV)}: Assume $n\geq 2$, $k\geq 1$ and $d'\geq 1$. We
may assume 
that $\beta_{r+1},\ldots,\beta_m$ are the elements $\beta_i$ of
$\{\beta_{r+1},\ldots,\beta_{d_n}\}$ such that
$G'(t_1,\ldots,t_{n-1},\beta_i)$ has
positive degree. We set 
$$
G_i'=G'(t_1,\ldots,t_{n-1},\beta_i)
$$ 
for $r+1\leq i\leq m$. Notice that $d=\sum_{i=1}^ra_i+\deg(G')\geq
r+d'\geq d_i'$. The
polynomial 
$$H:=(t_n-\beta_1)^{a_1}\cdots(t_n-\beta_r)^{a_r}$$ 
has
exactly $rd_1\cdots d_{n-1}$ roots in $X^*$. Hence, counting the roots
of $G'$ that are not in $Z_{X^*}(H)$, we obtain:
\begin{equation}\label{feb22-10}
|Z_{X^*}(G)|\leq
rd_1\cdots d_{n-1}+\sum_{i=r+1}^{m}|Z(G_i')|,\tag{$\star$}
\end{equation}
where $Z(G_i')$ is the set of zeros of
$G_i'$ in $A_1\times\cdots\times A_{n-1}$. For each $r+1\leq i\leq m$, we can 
write $d_i'=\sum_{i=1}^{k_i'}(d_i-1)+\ell_i'$, with $1\leq
\ell_i'\leq d_{k_i'+1}-1$. The proof of this case will be divided in 
three subcases. 

\underline{Subcase (IV.a)}: Assume $\ell\geq r$ and $k=n-1$. The
degree of $G_i'$ in the 
variable $t_j$ is at most $d_j-1$ for $j=1,\ldots,{n-1}$. Hence, by 
Lemma~\ref{dec12-11}, the non-zero polynomial
$G_i'$ cannot be the
zero-function on $A_1\times\cdots\times A_{n-1}$.  Therefore,
$|Z(G_i')|\leq d_1\cdots d_{n-1}-1$ for $r+1\leq i\leq m$. Thus, by
Eq.~(\ref{feb22-10}), we get the required inequality 
$$
|Z_{X^*}(G)|\leq
rd_1\cdots d_{n-1}+(d_n-r)(d_1\cdots d_{n-1}-1)\leq d_1\cdots
d_n-d_n+\ell,
$$
because in this case $d_{k+2}\cdots d_n=1$ and $\ell\geq r$.

\underline{Subcase (IV.b)}: Assume $\ell>r$ and $k\leq n-2$. Then, we can write
$$
d-r=\sum_{i=1}^k(d_i-1)+(\ell-r)
$$
with $1\leq \ell-r\leq d_{k+1}-1$. Since $d_i'\leq d-r$ for $i=r+1,\ldots,m$, by
applying Lemma~\ref{feb21-10} to the sequence
$d_1,\ldots,d_{n-1},d_i',d-r$, we get $k_i'\leq k$ for $r+1\leq i\leq
m$. By induction hypothesis we can bound $|Z(G_i')|$. Then, using
Eq.~(\ref{feb22-10}) and Lemma~\ref{feb21-10}, we 
obtain: 
\begin{eqnarray*}
|Z_{X^*}(G)|&\leq&
rd_1\cdots d_{n-1}+\sum_{i=r+1}^m d_{k_i'+2}\cdots d_{n-1}(d_1\cdots
d_{k_i'+1}-d_{k_i'+1}+\ell_i')\\ 
&\leq& rd_1\cdots d_{n-1}+(d_n-r)[(d_{k+2}\cdots
d_{n-1})(d_1\cdots d_{k+1}-d_{k+1}+\ell-r)].
\end{eqnarray*}
Thus, by factoring out the common term $d_{k+2}\cdots d_{n-1}$, we
need only show the inequality: 
\begin{eqnarray*}
& & rd_1\cdots d_{k+1}+(d_n-r)(d_1\cdots d_{k+1}-d_{k+1}+\ell-r)\leq \\ 
& &\ \ \ \ \ \ \ \ \ \ \ \ \ \ \ \ \ \ \ \ \ \ \ \ \ \ \ \ \ 
\ \ \ \ \ \ \ \ \ \ \ \ \ \ \ \ \ \ \ \ 
d_{n}(d_1\cdots d_{k+1}-d_{k+1}+\ell).
\end{eqnarray*}

After simplification, we get that this inequality is
equivalent to $r(d_n-d_{k+1}+\ell-r)\geq 0$. This inequality holds
because $d_n\geq d_{k+1}$ and $\ell>r$. 

\underline{Subcase (IV.c)}: Assume $\ell\leq r$. We can write 
$d-r=\sum_{i=1}^s(d_i-1)+\widetilde{\ell}$, where
$1\leq\widetilde{\ell}\leq d_{s+1}-1$ and $s\leq k$. Notice that
$s<k$. Indeed, if $s=k$, then from the equality
\begin{equation}\label{jan21-12}
d-r=\sum_{i=1}^s(d_i-1)+\widetilde{\ell}=\sum_{i=1}^k(d_i-1)+\ell-r\tag{$\star\star$}
\end{equation}
we get that $\widetilde{\ell}=\ell-r\geq 1$, a contradiction. Thus,
$s\leq n-2$. As
$d-r\geq d_i'$,  by applying Lemma~\ref{feb21-10} to
$d_1,\ldots,d_{n-1},d_i',d-r$, we have 
$k_i'\leq s\leq n-2$ for  $i=r+1,\ldots,m$. By induction
hypothesis we can bound $|Z(G_i')|$. Therefore, using Eq.~(\ref{feb22-10}) and
Lemma~\ref{feb21-10}, we
obtain:
\begin{eqnarray*}
|Z_{X^*}(G)|&\leq&
 rd_1\cdots d_{n-1}+\sum_{i=r+1}^m [d_1\cdots d_{n-1}-d_{k_i'+1}\cdots
 d_{n-1}+d_{k_i'+2}\cdots d_{n-1}\ell_i']\\
&\leq&
 rd_1\cdots d_{n-1}+(d_n-r)[d_1\cdots d_{n-1}-d_{s+1}\cdots
 d_{n-1}+d_{s+2}\cdots d_{n-1}\widetilde{\ell}\, ].
\end{eqnarray*}
Thus, we need only show the inequality 
\begin{eqnarray*}
& & rd_1\cdots d_{n-1}+(d_n-r)[d_1\cdots d_{n-1}-d_{s+1}\cdots
 d_{n-1}+d_{s+2}\cdots d_{n-1}\widetilde{\ell}\, ]\leq \\ 
& &\ \ \ \ \ \ \ \ \ \ \ \ \ \ \ \ \ \ \ \ \ \ \ \ \ \ \ \ \ 
\ \ \ \ \ \ \ \ \ \ \ \ \ \ \ \ \ \ \ \ 
d_1\cdots d_{n}-d_{k+1}\cdots
 d_{n}+d_{k+2}\cdots d_{n}\ell.
\end{eqnarray*}
After cancelling out some terms, we get the following equivalent
inequality:
\begin{equation}\label{jan21-12-1}
d_{k+1}\cdots  d_{n}-d_{k+2}\cdots d_{n}\ell\leq  (d_n-r)[d_{s+1}\cdots
 d_{n-1}-d_{s+2}\cdots d_{n-1}\widetilde{\ell}\,
 ].\tag{$\ddag$}
\end{equation}
The proof now reduces to show this inequality. 

\underline{Subcase (IV.c.1)}: Assume $k=n-1$. Then,
Eq.~(\ref{jan21-12-1}) simplifies to 
\begin{equation*}
d_{n}-\ell\leq  (d_n-r)[d_{s+1}\cdots
 d_{n-1}-d_{s+2}\cdots d_{n-1}\widetilde{\ell}\, ].
\end{equation*}
Since $d_n\geq r+1$, it suffices to show the inequality
$$
r+1-\ell\leq  d_{s+2}\cdots d_{n-1}(d_{s+1}-\widetilde{\ell}\, ).
$$
From Eq.~(\ref{jan21-12}), we get
$$
r+(1-\ell)=\ell-\widetilde{\ell}+\sum_{i=s+1}^{n-1}(d_i-1)+(1-\ell)
=-\widetilde{\ell}+d_{s+1}+\sum_{i=s+2}^{n-1}(d_i-1).
$$
Hence, the last inequality is equivalent to
$$
\sum_{i=s+2}^{n-1}(d_i-1)\leq (d_{s+2}\cdots
d_{n-1}-1)(d_{s+1}-\widetilde{\ell}).
$$
This inequality holds because $d_{s+2}\cdots d_{n-1}\geq
\sum_{i=s+2}^{n-1}(d_i-1)+1$. 

\underline{Subcase (IV.c.2)}: Assume $k\leq n-2$. By canceling out the common
term $d_{k+2}\cdots d_{n-1}$ in Eq.~(\ref{jan21-12-1}), we obtain the
following equivalent inequality
$$
d_{k+1}d_{n}-d_{n}\ell\leq  (d_n-r)(d_{s+2}\cdots 
 d_{k+1})(d_{s+1}-\widetilde{\ell}\, ).
$$
We rewrite this inequality as
$$
r(d_{s+2}\cdots 
 d_{k+1})(d_{s+1}-\widetilde{\ell}\, )\leq d_n[(d_{s+2}\cdots 
 d_{k+1})(d_{s+1}-\widetilde{\ell}\, )-d_{k+1}]+\ell d_n.
$$
Since $d_n\geq r+1$ it suffices to show the inequality
\begin{eqnarray*}
& & r(d_{s+2}\cdots 
 d_{k+1})(d_{s+1}-\widetilde{\ell}\, ) \leq \\ 
& &\ \ \ \ \ \ \ \ \  
r[(d_{s+2}\cdots 
 d_{k+1})(d_{s+1}-\widetilde{\ell}\, )-d_{k+1}]+[(d_{s+2}\cdots 
 d_{k+1})(d_{s+1}-\widetilde{\ell}\, )-d_{k+1}]+\ell d_n.
\end{eqnarray*}
After a quick simplification, this inequality reduces to
$$
(r+1)d_{k+1}\leq (d_{s+2}\cdots 
 d_{k+1})(d_{s+1}-\widetilde{\ell}\, )+\ell d_n.
$$
From Eq.~(\ref{jan21-12}), we get 
$r+1=(-\widetilde{\ell}+d_{s+1})+ (\ell+\sum_{i=s+2}^{k}(d_i-1))$.
Hence, the last inequality is equivalent to 
$$
d_{k+1}\sum_{i=s+2}^{k}(d_i-1)\leq d_{k+1}(d_{s+2}\cdots 
 d_k-1)(d_{s+1}-\widetilde{\ell}\, )+\ell (d_n-d_{k+1}).
$$
This inequality holds because $d_{s+2}\cdots d_{k}\geq
\sum_{i=s+2}^{k}(d_i-1)+1$. This completes the proof of 
the proposition. 
\end{proof}

\begin{corollary}\label{maria-vila-bound-affine} Let $d\geq 1$ be an
integer. If $d_i\leq d_{i+1}$ for all $i$ and
$d=\sum_{i=1}^k(d_i-1)+\ell$ for some integers $k,\ell$ 
such that $1\leq \ell\leq d_{k+1}-1$ and $0\leq k\leq n-1$,
then
$$
\max\{|Z_{X^*}(F)|\colon F\in
S_{\leq d};\, F\not\equiv 0\}
\leq d_{k+2}\cdots d_n(d_1\cdots d_{k+1}-d_{k+1}+\ell).
$$
\end{corollary}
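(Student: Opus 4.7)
The plan is to reduce the statement to Proposition~\ref{maria-vila-bound} by replacing $F$ with a normal-form representative whose partial degrees are controlled. Given $F\in S_{\leq d}$ with $F\not\equiv 0$ on $X^*$, I would apply the division algorithm (with respect to the reverse lexicographic order) to divide $F$ by the polynomials $f_i=\prod_{\gamma\in A_i}(t_i-\gamma)$ for $i=1,\ldots,n$. This produces $F=g_1f_1+\cdots+g_nf_n+G$ with $\deg_{t_i}(G)<d_i$ for every $i$. Since each $f_i\in I(X^*)$ by Lemma~\ref{epifania-dec29-11}, the polynomial $F-G$ vanishes on $X^*$, so $Z_{X^*}(F)=Z_{X^*}(G)$ and $G$ does not vanish identically on $X^*$. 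In particular, Lemma~\ref{dec12-11}(c) forces $G\neq 0$ as an element of $S$.

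Next I would check the total degree: each division step replaces a term $c\,t^\alpha$ of $F$ that is divisible by the leading monomial $t_i^{d_i}$ of some $f_i$ by terms of total degree strictly less than $\deg(t^\alpha)$, because the lower-order terms of $f_i$ all have degree $<d_i$. Hence the total degree cannot grow during the reduction, so $\deg(G)\leq \deg(F)\leq d$. If $\deg(G)=0$, then $G$ is a nonzero constant and $|Z_{X^*}(F)|=|Z_{X^*}(G)|=0$, so the bound is immediate. Otherwise write $\deg(G)=\sum_{i=1}^{k'}(d_i-1)+\ell'$ with $0\leq k'\leq n-1$ and $1\leq\ell'\leq d_{k'+1}-1$, which is possible because $d_i\leq d_{i+1}$ for all $i$ and $\deg_{t_i}(G)<d_i$ forces $\deg(G)\leq\sum_{i=1}^n(d_i-1)$. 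Applying Proposition~\ref{maria-vila-bound} to $G$ yields
\[
|Z_{X^*}(F)|=|Z_{X^*}(G)|\leq d_{k'+2}\cdots d_n\bigl(d_1\cdots d_{k'+1}-d_{k'+1}+\ell'\bigr).
\]

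To finish I would compare this bound, in terms of $\deg(G)$, with the desired bound in terms of $d$; this is a pure monotonicity statement. It is exactly Lemma~\ref{feb21-10} applied to the sequence $d_1,\ldots,d_n$ (that is, with $n$ in the lemma replaced by $n+1$), which gives
\[
-d_{k'+1}\cdots d_n+\ell' d_{k'+2}\cdots d_n\leq -d_{k+1}\cdots d_n+\ell d_{k+2}\cdots d_n.
\]
Adding $d_1\cdots d_n$ to both sides and regrouping converts this into
\[
d_{k'+2}\cdots d_n\bigl(d_1\cdots d_{k'+1}-d_{k'+1}+\ell'\bigr)\leq d_{k+2}\cdots d_n\bigl(d_1\cdots d_{k+1}-d_{k+1}+\ell\bigr),
\]
which combined with the previous display yields the corollary.

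The main obstacle is not really conceptual but bookkeeping: one must verify that the degree-reduction step really does preserve the total degree bound and produces a nonzero polynomial with the correct partial-degree constraints, and then carefully match the index ranges so that Lemma~\ref{feb21-10} can be invoked (in particular treating the boundary case $k=n-1$, where the product $d_{k+2}\cdots d_n$ is empty, by the empty-product conventions already built into the earlier statements).
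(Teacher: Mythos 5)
Your proposal is correct and follows essentially the same route as the paper: divide $F$ by the polynomials $f_i=\prod_{\gamma\in A_i}(t_i-\gamma)$ to get a remainder $G$ with $\deg_{t_i}(G)<d_i$ and $Z_{X^*}(F)=Z_{X^*}(G)$, bound $|Z_{X^*}(G)|$ via Proposition~\ref{maria-vila-bound}, and then use the monotonicity of Lemma~\ref{feb21-10} (applied to the full sequence $d_1,\ldots,d_n$) to pass from the bound at $\deg(G)$ to the bound at $d$. The only cosmetic difference is that the paper chains Lemma~\ref{feb21-10} twice, through the intermediate degree $\deg(F)$, whereas you apply it once directly from $\deg(G)$ to $d$; both are valid.
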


\begin{proof} Let $F=F(t_1,\ldots,t_n)\in S$ be an arbitrary polynomial of total
degree $d'\leq d$ such that $F(P)\neq 0$ for some $P\in X^*$. We can
write $d'=\sum_{i=1}^{k'}(d_i-1)+\ell'$ with $1\leq \ell'\leq
d_{k'+1}-1$ and $0\leq k'\leq k$. Let $\prec$ be the
graded reverse lexicographical order on 
the monomials of $S$. In this order $t_1\succ\cdots\succ t_n$. For
$1\leq i\leq n$, let $f_i$ be the polynomial $\prod_{\gamma\in
A_i}(t_i-\gamma)$. Recall that $d_i=|A_i|$, i.e., $f_i$ has degree
$d_i$. 
By the division algorithm \cite[Theorem~1.5.9, 
p.~30]{AL}, we can write
\begin{equation}\label{march12-10}
F=h_1f_1+\cdots+h_{n}f_n+G',\tag{$\dag\dag$}
\end{equation}
for some $G'\in S$ with $\deg_{t_i}(G')\leq d_i-1$ for
$i=1,\ldots,n$ and $\deg(G')=d''\leq d'$. If $G'$ is a constant, 
by Eq.~(\ref{march12-10}) and using that $0\neq F(P)=G'(P)$, we get 
$Z_{X^*}(F)=\emptyset$. Thus, we may assume that the polynomial $G'$
has positive 
degree $d''$. 
We can write
$d''=\sum_{i=1}^{k''}(d_i-1)+\ell''$, where $1\leq \ell''\leq d_{k''+1}$
and $0\leq k''\leq k'$. Notice that $Z_{X^*}(F)=Z_{X^*}(G')$. By
Proposition~\ref{maria-vila-bound}, and applying Lemma~\ref{feb21-10}
to the sequences $d_1,\ldots,d_n,d'',d'$ 
and $d_1,\ldots,d_n,d',d$, we obtain
\begin{eqnarray*}
|Z_{X^*}(F)|=|Z_{X^*}(G')|&\leq& d_1\cdots d_n-d_{k''+1}\cdots d_n
+d_{k''+2}\cdots d_n\ell'' \\
&\leq & d_1\cdots d_n-d_{k'+1}\cdots d_n
+d_{k'+2}\cdots d_n\ell'\\
&\leq & d_1\cdots d_n-d_{k+1}\cdots d_n
+d_{k+2}\cdots d_n\ell.
\end{eqnarray*}
Thus, $|Z_{X^*}(F)|\leq d_1\cdots d_n-d_{k+1}\cdots d_n
+d_{k+2}\cdots d_n\ell$, as required.
\end{proof}

We come to the main result of this section.

\begin{theorem}\label{lopez-renteria-vila}  
Let $K$ be a field and let $C_{X^*}(d)$ be the cartesian evaluation
code of degree $d$ on the 
finite set $X^*=A_1\times\cdots\times A_n\subset K^n$. If $2\leq
d_i\leq d_{i+1}$ for all $i$, with $d_i=|A_i|$, and $d\geq 1$, then
the minimum distance of 
$C_{X^*}(d)$ is given by 
$$
\delta_{X^*}(d)=\left\{\hspace{-1mm}
\begin{array}{ll}\left(d_{k+1}-\ell\right)d_{k+2}\cdots d_n&\mbox{ if }
d\leq \sum\limits_{i=1}^{n}\left(d_i-1\right)-1,\\
\qquad \qquad 1&\mbox{ if } d\geq \sum\limits_{i=1}^{n}\left(d_i-1\right),
\end{array}
\right.
$$
where $k\geq 0$, $\ell$ are the unique integers such that 
$d=\sum_{i=1}^{k}\left(d_i-1\right)+\ell$ and $1\leq \ell \leq
d_{k+1}-1$. 
\end{theorem}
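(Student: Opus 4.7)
The plan is to split along the stated case distinction. For $d\geq \sum_{i=1}^n(d_i-1)$ the equality $\delta_{X^*}(d)=1$ is already established by Theorem~\ref{dim-length}, since there $H_{X^*}(d)=|X^*|$ forces $C_{X^*}(d)=K^{|X^*|}$. So the substantive work is the range $1\leq d\leq \sum_{i=1}^n(d_i-1)-1$, for which I would prove matching upper and lower bounds on $\delta_{X^*}(d)$.

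For the upper bound I would exhibit an explicit polynomial realizing the claimed weight. Fix any $\beta_i\in A_i$ for $1\leq i\leq k$, and $\ell$ distinct elements $\beta_{k+1,1},\ldots,\beta_{k+1,\ell}\in A_{k+1}$, and set
$$
f \;=\; \prod_{i=1}^{k}\prod_{\gamma\in A_i\setminus\{\beta_i\}}(t_i-\gamma)\;\cdot\;\prod_{j=1}^{\ell}\bigl(t_{k+1}-\beta_{k+1,j}\bigr).
$$
Its total degree is $\sum_{i=1}^k(d_i-1)+\ell=d$, so $f\in S_{\leq d}$, and $f(x_1,\ldots,x_n)\neq 0$ precisely when $x_i=\beta_i$ for every $i\leq k$, when $x_{k+1}\notin\{\beta_{k+1,1},\ldots,\beta_{k+1,\ell}\}$, and when $(x_{k+2},\ldots,x_n)$ ranges freely over $A_{k+2}\times\cdots\times A_n$. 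Counting yields $\|{\rm ev}_d(f)\|=(d_{k+1}-\ell)d_{k+2}\cdots d_n\geq 1$ (using $\ell\leq d_{k+1}-1$), and hence $\delta_{X^*}(d)\leq (d_{k+1}-\ell)d_{k+2}\cdots d_n$.

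For the matching lower bound I would invoke Corollary~\ref{maria-vila-bound-affine}: every $F\in S_{\leq d}$ with ${\rm ev}_d(F)\neq 0$ satisfies $|Z_{X^*}(F)|\leq d_{k+2}\cdots d_n(d_1\cdots d_{k+1}-d_{k+1}+\ell)$, so
$$
\|{\rm ev}_d(F)\|=|X^*|-|Z_{X^*}(F)|\geq d_1\cdots d_n-d_{k+2}\cdots d_n\bigl(d_1\cdots d_{k+1}-d_{k+1}+\ell\bigr)=(d_{k+1}-\ell)d_{k+2}\cdots d_n,
$$
which matches the upper bound. The main obstacle lies not in this theorem itself but upstream: the technical content is concentrated in Proposition~\ref{maria-vila-bound} and its corollary, whose delicate induction on $n$ (with four subcases according to whether $\ell$ exceeds the multiplicity of linear factors peeled off in $t_n$) supplies the sharp zero-set bound. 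Once that bound is available, the present theorem is simply the assembly of it with the explicit extremal polynomial $f$ above.
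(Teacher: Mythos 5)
Your proposal is correct and follows essentially the same route as the paper: the large-degree case is dispatched via Theorem~\ref{dim-length}, the extremal polynomial you exhibit is (up to sign and labeling) exactly the product $f_1\cdots f_{k+1}$ used in the paper, and the matching lower bound is obtained from Corollary~\ref{maria-vila-bound-affine} in the same way. The only cosmetic difference is that you count the support of ${\rm ev}_d(f)$ directly while the paper counts $|Z_{X^*}(G)|$ and subtracts from $|X^*|$.
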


\begin{proof} If $d\geq \sum_{i=1}^n(d_i-1)$, then the minimum
distance of $C_{X^*}(d)$ is equal to $1$ by Theorem~\ref{dim-length}.
Assume that $1\leq d\leq 
\textstyle\sum_{i=1}^{n}\left(d_i-1\right)-1$. We can write
$$
A_i=\{\beta_{i,1},\beta_{i,2},\ldots,\beta_{i,d_i}\}, \ \ \ \
i=1,\ldots,n.
$$
For $1\leq i\leq k+1$, consider the polynomials
$$
f_i=\left\{\hspace{-1mm}
\begin{array}{ll}(\beta_{i,1}-t_i)(\beta_{i,2}-t_i)\cdots
(\beta_{i,d_{i}-1}-t_i)&\mbox{ if }
1\leq i\leq k,\\
(\beta_{k+1,1}-t_{k+1})(\beta_{k+1,2}-t_{k+1})\cdots
(\beta_{k+1,\ell}-t_{k+1})&\mbox{ if } i=k+1.
\end{array}
\right.
$$
The polynomial $G=f_1\cdots f_{k+1}$ has degree $d$ and
$G(\beta_{1,d_1},\beta_{2,d_2},\ldots,\beta_{n,d_n})\neq 0$. From the
equality
\begin{eqnarray*}
Z_{X^*}(G)& =& [(A_1\setminus\{\beta_{1,d_1}\})\times A_2
\times\cdots\times A_n]\cup\\
& &[\{\beta_{1,d_1}\}\times (A_2\setminus\{\beta_{2,d_2}\})\times A_3
\times\cdots\times A_n]\cup\\
& & \ \ \ \ \ \  \ \ \ \ \ \  \ \ \ \ \ \  \ \ \ \ \ \  \ \ \ \ \ \ \vdots\\
& & [\{\beta_{1,d_1}\}\times\cdots\times \{\beta_{k-1,d_{k-1}}\}\times
(A_k\setminus\{\beta_{k,d_k}\})\times A_{k+1}
\times\cdots\times A_n]\cup\\
& &[\{\beta_{1,d_1}\}\times\cdots\times \{\beta_{k,d_{k}}\}\times
\{\beta_{k+1,1},\ldots,\beta_{k+1,\ell}\}\times A_{k+2}
\times\cdots\times A_n],
\end{eqnarray*}
we get that the number of
zeros of $G$ in $X^*$ is given by:
$$
|Z_{X^*}(G)|=\sum_{i=1}^k(d_i-1)(d_{i+1}\cdots d_n)+\ell
d_{k+2}\cdots d_n=d_1\cdots d_n-d_{k+1}\cdots d_n+\ell d_{k+2}\cdots
d_n.
$$
By Lemma~\ref{dec12-11}, one has $|X^*|=d_1\cdots d_n$. Therefore
\begin{eqnarray*}
\delta_{X^*}(d)&=&\min\{\|{\rm ev}_d(F)\|
\colon {\rm ev}_d(F)\neq 0; F\in S_{\leq d}\}=|X|-\max\{|Z_{X^*}(F)|\colon F\in
S_{\leq d};\, F\not\equiv 0\}\\ 
&\leq& d_1\cdots d_n-|Z_{X^*}(G)|=\left(d_{k+1}-\ell\right)d_{k+2}\cdots d_n,
\end{eqnarray*}
where $\|{\rm ev}_d(F)\|$ is the number of non-zero
entries of ${\rm ev}_d(F)$ and $F\not\equiv 0$ means that $F$ is not 
the zero function on $X^*$. Thus
$$
\delta_{X^*}(d)\leq 
(d_{k+1}-\ell)d_{k+2}\cdots d_n.
$$
The reverse inequality follows at once from Corollary~\ref{maria-vila-bound-affine}.
\end{proof}

\begin{definition}\label{projectivetorus-def} If $K$ is a finite
field, the set $\mathbb{T}=\{[(x_1,\ldots,x_{n+1})]\in\mathbb{P}^{n}\vert\, x_i\in
K^*\mbox{ for all }i\}$ is called a {\it projective torus} 
in $\mathbb{P}^{n}$, where $K^*=K\setminus\{0\}$. 
\end{definition}

As a consequence of our main result, we recover the following formula
for the minimum 
distance of a parameterized code over a projective torus.

\begin{corollary}{\cite[Theorem~3.5]{ci-codes}}\label{maria-vila-hiram-eliseo}
Let $K=\mathbb{F}_q$ be a finite field with $q\neq 2$ elements. 
If $\mathbb{T}$ is a projective torus in $\mathbb{P}^{n}$ and $d\geq 1$, 
 then the minimum distance of $C_\mathbb{T}(d)$ is given by 
$$
\delta_\mathbb{T}(d)=\left\{\begin{array}{cll}
(q-1)^{n-k-1}(q-1-\ell)&\mbox{if}&d\leq (q-2)n-1,\\
1&\mbox{if}&d\geq (q-2)n,
\end{array}
 \right.
$$
where $k$ and $\ell$ are the unique integers such that $k\geq 0$,
$1\leq \ell\leq q-2$ and $d=k(q-2)+\ell$. 
\end{corollary}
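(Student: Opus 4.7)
The plan is to recognize this corollary as a direct specialization of Theorem~\ref{lopez-renteria-vila} once we identify the projective torus $\mathbb{T}$ with the projective closure of a suitable cartesian product, and then invoke Proposition~\ref{bridge-affine-projective} to transfer the minimum distance formula from the affine side to the projective side.

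First I would set $A_i := K^* = \mathbb{F}_q\setminus\{0\}$ for $i=1,\ldots,n$, so that $d_i := |A_i| = q-1$ for all $i$. Since we assume $q\neq 2$, we have $q\geq 3$ and hence $2\leq d_i\leq d_{i+1}$, meeting the hypothesis of Theorem~\ref{lopez-renteria-vila}. With this choice, $X^* = (K^*)^n$ is the affine torus, and its projective closure is
\[
Y = \{[(\gamma_1,\ldots,\gamma_n,1)]\,:\,\gamma_i\in K^*\}.
\]
Every point of $\mathbb{T}$ has a representative whose last coordinate is nonzero, and scaling by the inverse of that coordinate shows $Y = \mathbb{T}$. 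By Proposition~\ref{bridge-affine-projective}, $C_{X^*}(d) = C_{\mathbb{T}}(d)$, so the two codes share the same minimum distance.

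Next I would plug $d_i = q-1$ into Theorem~\ref{lopez-renteria-vila}. The threshold is $\sum_{i=1}^n(d_i-1) = n(q-2)$, so for $d\geq n(q-2)$ the theorem gives $\delta_{X^*}(d) = 1$, matching the second case of the corollary. For $1\leq d\leq n(q-2)-1$, the unique decomposition $d = \sum_{i=1}^k(d_i-1)+\ell$ with $1\leq\ell\leq d_{k+1}-1$ becomes $d = k(q-2)+\ell$ with $1\leq\ell\leq q-2$, precisely the decomposition used in the corollary. Substituting $d_{k+1}=q-1$ and $d_{k+2}\cdots d_n = (q-1)^{n-k-1}$ yields
\[
\delta_{X^*}(d) = (d_{k+1}-\ell)d_{k+2}\cdots d_n = (q-1-\ell)(q-1)^{n-k-1},
\]
which is the first case of the corollary.

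Since every step is a direct identification or substitution, there is no real obstacle here: the only point requiring a remark is the identification $Y=\mathbb{T}$, which is immediate from the definition of the projective torus and the observation that any coordinate of a point in $\mathbb{T}$ may be normalized to $1$. The assumption $q\neq 2$ is used solely to ensure $d_i\geq 2$, so that Theorem~\ref{lopez-renteria-vila} applies.
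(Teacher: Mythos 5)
Your proposal is correct and follows exactly the paper's own argument: take $A_i=K^*$ so $d_i=q-1$, identify $Y=\mathbb{T}$, and specialize Theorem~\ref{lopez-renteria-vila} via $\delta_{X^*}(d)=\delta_{\mathbb{T}}(d)$. The only difference is that you spell out the substitutions and the normalization of the last coordinate, which the paper leaves implicit.
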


\begin{proof} If $A_i=K^*$ for $i=1,\ldots,n$, then $X^*=(K^*)^n$, 
$Y=\mathbb{T}$, and $d_i=q-1$ for all $i$. Since
$\delta_{X^*}(d)=\delta_Y(d)$, the result follows at once from 
Theorem~\ref{lopez-renteria-vila}.
\end{proof}

As another consequence of our main result, we recover a formula for the 
minimum distance of an evaluation code over an affine space. 

\begin{corollary}{\rm
\cite[Theorem~2.6.2]{delsarte-goethals-macwilliams}}\label{delsarte-etal} 
Let $K=\mathbb{F}_q$ be a finite field and let $Y$ be the image of
$\mathbb{A}^n$ under the map $\mathbb{A}^{n}\rightarrow
\mathbb{P}^{n} $, $x\mapsto [(x,1)]$. If $d\geq 1$, the minimum
distance of $C_Y(d)$ is given by: 
$$
\delta_Y(d)=\left\{\begin{array}{cll}
(q-\ell)q^{n-k-1}&\mbox{if}&d\leq n(q-1)-1,\\
1&\mbox{if}&d\geq n(q-1),
\end{array}
 \right.
$$
where $k$ and $\ell$ are the unique integers such that $k\geq 0$,
$1\leq \ell\leq q-1$ and $d=k(q-1)+\ell$. 
\end{corollary}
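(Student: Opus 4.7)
The plan is to realize this corollary as a direct specialization of Theorem~\ref{lopez-renteria-vila}. First I would take $A_i = \mathbb{F}_q$ for every $i = 1, \ldots, n$, so that $X^* = A_1 \times \cdots \times A_n = \mathbb{F}_q^n = \mathbb{A}^n$ and $d_i = |A_i| = q$ for all $i$. With this choice, the projective closure of $X^*$ is exactly $Y = \{[(x,1)] \mid x \in \mathbb{A}^n\}$, which coincides with the set appearing in the statement.

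Next, I would invoke Proposition~\ref{bridge-affine-projective} to conclude $C_{X^*}(d) = C_Y(d)$, so in particular these codes share their minimum distance, and it suffices to compute $\delta_{X^*}(d)$. Since the hypothesis $2 \le d_i \le d_{i+1}$ is satisfied trivially (all $d_i$ equal $q$, with $q \ge 2$), Theorem~\ref{lopez-renteria-vila} applies.

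Now I would translate the piecewise formula. The threshold $\sum_{i=1}^n(d_i-1)$ becomes $n(q-1)$, matching the case split in the corollary. For $d \ge n(q-1)$, the theorem immediately gives $\delta_Y(d) = 1$. For $1 \le d \le n(q-1) - 1$, writing $d = k(q-1) + \ell$ with $0 \le k$ and $1 \le \ell \le q-1$ is exactly the unique decomposition $d = \sum_{i=1}^k (d_i - 1) + \ell$ with $1 \le \ell \le d_{k+1} - 1$ required by the theorem. Substituting $d_{k+1} = q$ and $d_{k+2} \cdots d_n = q^{n-k-1}$ into $(d_{k+1} - \ell) d_{k+2} \cdots d_n$ yields $(q-\ell) q^{n-k-1}$, which is the claimed minimum distance. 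No step should pose any real obstacle beyond checking that the indexing conventions (in particular the edge case $k = n-1$, where the empty product $d_{k+2}\cdots d_n$ is taken to be $1$) line up with those of the theorem; this is a routine bookkeeping matter.
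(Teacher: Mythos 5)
Your proposal is correct and follows essentially the same route as the paper: specialize to $A_i=\mathbb{F}_q$ so that $d_i=q$, identify $\delta_{X^*}(d)$ with $\delta_Y(d)$, and read off the formula from Theorem~\ref{lopez-renteria-vila} with $d_{k+1}-\ell=q-\ell$ and $d_{k+2}\cdots d_n=q^{n-k-1}$. The paper's proof is just a terser version of the same argument, so nothing further is needed.
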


\begin{proof} If $A_i=K$ for $i=1,\ldots,n$, then
$X^*=K^n=\mathbb{A}^n$ and $d_i=q$ 
for all $i$. Since
$\delta_{X^*}(d)=\delta_Y(d)$, the result follows at once from 
Theorem~\ref{lopez-renteria-vila}.
\end{proof}

\begin{example} If $X^*=\mathbb{F}_2^n$, then the basic parameters of
$C_{X^*}(d)$ are given by
\begin{eqnarray*}
|X^*|=2^n, & \dim
C_{X^*}(d)=\sum_{i=0}^d\binom{n}{i},&\delta_{X^*}(d)=2^{n-d},
\ \ \
1\leq d\leq n. 
\end{eqnarray*}
\end{example}

\begin{example} Let $K=\mathbb{F}_9$ be a field with $9$ elements.
Assume that $A_i=K$ 
for $i=1,\ldots,4$. For certain values of $d$, the basic
parameters of $C_{X^*}(d)$ 
are given in the following table:
\begin{eqnarray*}
&&\left.
\begin{array}{c|c|c|c|c|c|c|c|c|c|c|c}
 d & 1 & 2 & 3 & 4 & 5 & 10&16&20&28&31&32\\
   \hline
 |X^*| &  6561 & 6561 & 6561 & 6561 & 6561 & 6561&6561&6561&6561&6561&6561\\ 
   \hline
 \dim C_{X^*}(d)    \   & 5 & 15&35 &70 &126 &
 981&3525&5256&6526&6560 &6561\\ 
   \hline
 \delta_{X^*}(d) & 5832 & 5103 & 4374 & 3645 &2916 &567&81&45&5&2 &1\\ 
\end{array}
\right.
\end{eqnarray*}
\end{example}

\section{Cartesian codes over degenerate
tori}\label{degenerate-tori-section}

Given a non decreasing sequence of positive integers
$d_1,\ldots,d_n$, we construct a cartesian code, over a degenerate
torus, with prescribed
parameters in terms of $d_1,\ldots,d_n$.

\begin{definition}\label{degenerate-torus-def} Let $K=\mathbb{F}_q$
be a finite field and let  
$v=(v_1,\ldots,v_n)$ be a sequence of positive
integers. The set  
$$
X^*=\{(x_1^{v_{1}},\ldots,x_n^{v_n})\, \vert\, x_i\in K^*\mbox{ for
all }i\}\subset\mathbb{A}^{n},
$$
is called a {\it degenerate torus\/} of type $v$.
\end{definition}

The main result of this section is:

\begin{theorem}\label{construction-of-cartesian-codes} 
Let $2\leq d_1\leq \cdots\leq\ d_n$ be a sequence of
integers. Then, there is a finite field $K=\mathbb{F}_q$ and a
degenerate torus $X^*$ such that the length of $C_{X^*}(d)$ 
is $d_1\cdots d_n$, its dimension is 
\begin{eqnarray*}
& & \dim_K\, C_{X^*}(d)=\binom{n+d}{d}-\sum\limits_{1\leq i\leq n}
\binom{n+d-d_i}{d-d_i}+\sum\limits_{i<j}
\binom{n+d-(d_i+d_j)}{d-(d_i+d_j)}-\\
&&  \sum\limits_{i<j<k}
\binom{n+d-(d_i+d_j+d_k)}{d-(d_i+d_j+d_k)} + \cdots + (-1)^n
\binom{n+d-(d_1+\cdots +d_n)}{d-(d_1+ \cdots +d_n)},
\end{eqnarray*}
its minimum distance is $1$ if $d\geq
\sum_{i=1}^{n}(d_i-1)$, and
$$
\delta_{X^*}(d)=(d_{k+1}-\ell)d_{k+2}\cdots d_n\ \ \mbox{ if } \ \ 
\textstyle d\leq \sum_{i=1}^{n}\left(d_i-1\right)-1,
$$
where $k\geq 0$, $\ell$ are the unique integers such that 
$d=\sum_{i=1}^{k}\left(d_i-1\right)+\ell$ and $1\leq \ell \leq
d_{k+1}-1$. 
\end{theorem}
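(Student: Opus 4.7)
The plan is to reduce Theorem~\ref{construction-of-cartesian-codes} to Theorems~\ref{dim-length} and \ref{lopez-renteria-vila} by exhibiting a finite field $K=\mathbb{F}_q$ and a degenerate torus $X^*$ that splits as a cartesian product $A_1\times\cdots\times A_n$ with $|A_i|=d_i$. Once such a factorization is in place, the length formula, the dimension formula, and the minimum distance formula are immediate consequences of the two cited theorems, since the hypothesis $2\leq d_1\leq\cdots\leq d_n$ is exactly what those results require.

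The first step is to choose the field. Let $m=\mathrm{lcm}(d_1,\ldots,d_n)$. By Dirichlet's theorem on primes in arithmetic progressions there exists a prime $q$ with $q\equiv 1\pmod{m}$; in particular $d_i\mid q-1$ for every $i$. Take $K=\mathbb{F}_q$ and set $v_i:=(q-1)/d_i$ for $i=1,\ldots,n$. Define the degenerate torus of type $v=(v_1,\ldots,v_n)$ by
$$
X^*=\{(x_1^{v_1},\ldots,x_n^{v_n})\,\vert\,x_i\in K^*\}\subset\mathbb{A}^n,
$$
as in Definition~\ref{degenerate-torus-def}.

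The second, and really only substantive, step is to identify $X^*$ as a cartesian product. Since $K^*$ is cyclic of order $q-1$, the image $A_i:=\{x^{v_i}\,\vert\,x\in K^*\}$ of the $v_i$-th power map is the unique subgroup of $K^*$ of order $(q-1)/\gcd(v_i,q-1)=d_i$. Hence
$$
X^*=A_1\times\cdots\times A_n,
$$
where $A_i$ is a subset of $K$ of cardinality $|A_i|=d_i$. Thus $X^*$ is a cartesian set in the sense of Section~\ref{intro-cartesian-codes} with the prescribed cardinalities $d_1,\ldots,d_n$.

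With the factorization established, the conclusion is routine. Theorem~\ref{dim-length} gives the length $|X^*|=d_1\cdots d_n$ and the stated inclusion-exclusion formula for $\dim_K C_{X^*}(d)=H_{X^*}(d)$ in terms of the sequence $d_1,\ldots,d_n$, while Theorem~\ref{lopez-renteria-vila} (applied with the ordering hypothesis $d_i\leq d_{i+1}$ that we have arranged) yields the minimum distance formula, both for $d\leq\sum_{i=1}^n(d_i-1)-1$ and for $d\geq\sum_{i=1}^n(d_i-1)$. The only nontrivial point in the whole argument is the existence of the prime $q$ with $q\equiv 1\pmod{m}$ and the recognition that the $v_i$-th power map in $K^*$ realizes the subgroup of order $d_i$; everything else is a direct invocation of the results of Sections~\ref{ci-section} and \ref{cartesian-section}.
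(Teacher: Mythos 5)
Your proposal is correct and follows essentially the same route as the paper: realize the degenerate torus as a cartesian product $A_1\times\cdots\times A_n$ of subgroups of $K^*$ with $|A_i|=d_i$ and then quote Theorems~\ref{dim-length} and \ref{lopez-renteria-vila}. The only difference is cosmetic: you obtain a prime $q\equiv 1\pmod{\mathrm{lcm}(d_1,\ldots,d_n)}$ via Dirichlet's theorem, whereas the paper more elementarily takes $q=p^{\varphi(m)}$ for a prime $p$ coprime to $m=d_1\cdots d_n$ and applies Euler's theorem; both choices make every $d_i$ divide $q-1$, which is all that is needed.
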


\begin{proof} Pick a prime number $p$ relatively prime
to $m=d_1\cdots d_n$. Then, by Euler formula, $p^{\varphi(m)}\equiv 1\
({\rm mod}\ m)$, where $\varphi$ is the Euler function. 
We set $q=p^{\varphi(m)}$. Hence, there exists a finite field
$\mathbb{F}_q$ with $q$ elements such that $d_i$ divides $q-1$ for
$i=1,\ldots,n$. We set $K=\mathbb{F}_q$.   

Let $\beta$ be a generator of the cyclic group
$(K^*,\, \cdot\, )$. There are positive integers $v_1,\ldots,v_n$ 
such that $q-1=v_id_i$ for $i=1,\ldots,n$. Notice that
$d_i$ is equal to $o(\beta^{v_i})$, the order of $\beta^{v_i}$ for
$i=1,\ldots,n$. We set $A_i=\langle\beta^{v_i}\rangle$, where
$\langle\beta^{v_i}\rangle$ is the subgroup of $K^*$ generated by
$\beta^{v_i}$. If $X^*$ is the cartesian product of $A_1,\ldots,A_n$,
it not hard to see that $X^*$ is given by
$$
X^*=\{(x_1^{v_{1}},\ldots,x_n^{v_n})\, \vert\, x_i\in K^*\mbox{ for all
}i\}\subset\mathbb{A}^{n},
$$
i.e., $X^*$ is a degenerate torus of type $v=(v_1,\ldots,v_n)$. The
length of $|X^*|$ is 
$d_1\cdots d_n$ because $|A_i|=d_i$ for all $i$. The
formulae for the dimension and the minimum distance of $C_{X^*}(d)$
follow from Theorems~\ref{dim-length} and \ref{lopez-renteria-vila}.
\end{proof}

\begin{remark}\label{recovering-evaluation} 
Let $K=\mathbb{F}_q$ be a finite field and let $\beta$ be a
generator of the cyclic group $(K^*,\cdot\, )$. If $X^*$ is a degenerate
torus of type $v=(v_1,\ldots,v_n)$, then $X^*$ is the cartesian
product of $A_1,\ldots,A_n$, where $A_i$ is the cyclic group generated
by $\beta^{v_i}$. Thus, if $d_i=|A_i|$ for $i=1,\ldots,n$, the
affine evaluation code over $X^*$ is a cartesian code. Hence,
according to Theorem~\ref{dim-length} and \ref{lopez-renteria-vila}, the
basic parameters of $C_{X^*}(d)$ can be computed in terms of
$d_1,\ldots,d_n$ as in 
Theorem~\ref{construction-of-cartesian-codes}. Therefore, we are
 recovering the main results of
\cite{evaluation11,evaluation}.
\end{remark}

As an illustration of Theorem~\ref{construction-of-cartesian-codes}
consider the following example. 

\begin{example} Consider the sequence $d_1=2$, $d_2=5$, $d_3=9$. The
prime number $q=181$ satisfies that $d_i$ divides $q-1$ for all $i$.
In this case $v_1=90$, $v_2=36$, $v_3=20$. 
The basic parameters of the cartesian codes $C_{X^*}(d)$, over the degenerate torus 
$$
X^*=\{(x_1^{90},x_2^{36},x_3^{20})\vert\, x_i\in
\mathbb{F}_{181}^*\ \mbox{ for }i=1,2,3\},
$$
are shown in the following table. Notice that the regularity of
$S[u]/I(Y)$ is $13$. 
\begin{eqnarray*}
&&\left.
\begin{array}{c|c|c|c|c|c|c|c|c|c|c|c|c|c}
 d & 1 & 2 & 3 & 4 & 5 & 6 & 7 & 8 & 9 & 10 & 11 & 12 & 13 \\
   \hline
 |X^*| & 90 & 90 & 90 & 90 & 90 & 90 & 90 & 90 & 90 & 90 & 90 & 90 & 90
 \\ 
   \hline
 \dim C_{X^*}(d)    \    & 4 & 9   & 16& 25 &35& 45& 55 & 65 & 74&
 81 & 86 & 89 & 90 \\ 
   \hline
 \delta_{X^*}(d) & 45 & 36 & 27 & 18 & 9 & 8 & 7 & 6 & 5 & 4  & 3  & 2   &
  1\\ 
\end{array}
\right.
\end{eqnarray*}

Notice that if $K'=\mathbb{F}_9$, and we pick subsets $A_1,A_2,A_3$ of
$K'$ with $|A_1|=2$, $|A_2|=5$, $|A_3|=9$, the cartesian evaluation
code $C_{X'}(d)$, over the set $X'=A_1\times A_2\times A_3$, has 
the same parameters that $C_{X^*}(d)$ for any $d\geq 1$.
\end{example}

\noindent {\bf Acknowledgments.} We thank the referees for their
careful reading of the paper and for the improvements that
they suggested.

\bibliographystyle{plain}

\end{document}